\documentclass[journal,twoside,web]{ieeecolor}
\usepackage{generic}
\usepackage{hyperref}
\usepackage{tabularx}
\usepackage{booktabs}       
\usepackage{tikz} 
\usepackage{pifont}
\usetikzlibrary{positioning, fit, arrows.meta, shapes, calc}

\usepackage{textcomp}
\def\BibTeX{{\rm B\kern-.05em{\sc i\kern-.025em b}\kern-.08em
    T\kern-.1667em\lower.7ex\hbox{E}\kern-.125emX}}
\usepackage{amsthm,amsmath,amssymb}
\usepackage{threeparttable} 

\usepackage{ifpdf}
\usepackage{amsfonts}
\usepackage{mathrsfs}
\usepackage{graphicx}
\usepackage{subfigure}
\usepackage{epstopdf}
\usepackage{lineno}
\usepackage{cite}
\usepackage{enumerate}
\usepackage{algorithm, algorithmic}
\usepackage{bm}
\usepackage{tikz}

\newtheorem{theorem}{Theorem}

\newtheorem{lemma}{Lemma}     
\newtheorem{remark}{Remark}

\newtheorem{assumption}{Assumption}

\begin{document}
\title{Wasserstein Robust Performative Prediction via Lagrangian Relaxation}
\author{Siyi Wang, Zifan Wang, Karl H. Johansson, \IEEEmembership{Fellow, IEEE}
\thanks{This work was supported by the Swedish Research Council Distinguished Professor Grant 2017-01078, Knut and Alice Wallenberg Foundation, Wallenberg Scholar Grant, and Swedish Strategic Research Foundation SUCCESS Grant FUS21-0026.}
\thanks{Siyi~Wang, Zifan~Wang, and Karl H. Johansson are with the Department of Decision and Control Systems, School of Electrical Engineering and Computer Science, KTH Royal Institute of Technology,
10044  Stockholm, Sweden, e-mail: \{siyiw, zifanw, kallej\}@kth.se. They are also affiliated with Digital Futures.} }
\maketitle

\begin{abstract}
In machine learning, predictive models are trained on historical data. Their deployment may incentivize agents to strategically adapt their behavior, thereby inducing a model-dependent distribution shift. 
This phenomenon is known as performativity.
This paper develops a Wasserstein distributionally robust framework for performative prediction, where the predictive model only has access to limited data. 
Using these data,
we construct an ambiguity set centered on the empirical distribution, and optimize the predictive model against the worst-case distribution. 
Furthermore, we reformulate the objective as a tractable min-max optimization problem via Lagrangian relaxation, and allow the penalty to depend on the prediction model.
Based on this, we develop distributionally robust repeated risk minimization  (DR-RRM) and repeated gradient descent (DR-RGD) algorithms to iteratively find a performative stable point amid distributional shifts and model retraining. We theoretically show that both algorithms converge to a stable point linearly under standard regularity conditions. When accounting for approximation errors in the optimization problems, both algorithms converge to a neighborhood of the stable point. Additionally, we establish theoretical bounds on the suboptimality gap between the stable point and the global performative optimum. 
 Finally, numerical simulations of a dynamic credit scoring problem demonstrate the efficacy of the method.
\end{abstract}

\begin{IEEEkeywords} 
Decision-dependent distributions, distributionally robust optimization, risk minimization 
\end{IEEEkeywords}

\section{Introduction}\label{sec:introduction}

In machine learning, training predictive models is typically formulated as the problem of finding parameters that minimize expected risk \cite{spall2005introduction}.
This approach has been widely applied in fields such as recommendation systems \cite{tang2023zeroth}, automated trading \cite{kraft2023stochastic}, and multi-robotic systems \cite{ma2026stochastic}. 
Standard approaches typically treat the data distribution as either fixed or exogenously nonstationary \cite{besbes2015non}. 
In practice, deploying a predictive model may incentivize agents to adapt their behavior, thereby shifting the underlying data distribution. 
For instance, in credit scoring, applicants may strategically adjust their financial profiles to align with the scoring rules, thereby improving their chances of approval. 
Similarly, in traffic routing, predicting congestion and suggesting optimal routes can alter driver behavior, creating new traffic patterns \cite{keimer2018information}. 
This model-induced distribution shift is formalized as performativity \cite{perdomo2020performative,hardt2025performative}, and has been studied in data-driven control \cite{bianchin2023online}, multiplayer games \cite{narang2023multiplayer,wang2023network,le2025learning}, and reinforcement learning~\cite{bertrand2023stability}. 

In performative prediction, deploying a model changes the underlying population distribution. To adapt to this shift, the model is retrained using data collected after deployment.
This gives rise to a dynamic feedback loop involving model deployment, distribution evolution, and model retraining. In this setting, two solution concepts have been widely studied: the performative optimal point \cite{miller2021outside}, which minimizes the performative risk, and the performative stable point \cite{perdomo2020performative}, where the model is optimal with respect to the population distribution it induces.

In practical retraining, the exact population distribution is rarely available. The learner typically observes only a limited number of samples, which may be subject to measurement noise or environmental shifts. This induces an additional distributional uncertainty beyond performativity. 
Moreover, training models on empirical distributions constructed from finite samples often leads to overfitting and suboptimal generalization. 
To mitigate this risk, distributionally robust optimization (DRO) provides a principled framework for addressing such distributional uncertainty \cite{mohajerin2018data,blanchet2019quantifying,kuhn2025distributionally}. Instead of minimizing the expected loss under a nominal or empirical distribution, DRO minimizes the worst-case expected loss over an ambiguity set of plausible distributions \cite{kuhn2025distributionally}.  
The ambiguity set defines a class of candidate distributions centered at the empirical data, and is typically constructed using either $\phi$-divergence \cite{duchi2021statistics} or the Wasserstein metric \cite{mohajerin2018data,shafieezadeh2019regularization}. 
The latter has gained traction due to its capability to capture distribution geometry and handle shifted support. 
Additionally, leveraging duality results, many DRO formulations can be recast into computationally tractable convex programs \cite{mohajerin2018data}.
Given these advantages, DRO has been extensively explored for control systems \cite{shafieezadeh2018wasserstein,coulson2021distributionally,taskesen2023distributionally,mcallister2024distributionally,brouillon2025distributionally} and machine learning \cite{kuhn2019wasserstein,liu2022distributionally}.

\begin{table*}
\centering
\caption{Comparison of frameworks and theoretical results.}
\label{tab:comparison}
\begin{tabular}{lcccccc}
\hline
Ref. & Solution point & Distance metric & Adjustable ambiguity size  & RRM-type Algorithm & RGD-type Algorithm  \\ 
\hline
\cite{luo2020distributionally} & Optimal  &   Wasserstein and others &    \ding{51}  &  \ding{55} & --             \\
\cite{qu2025decision}  &  Optimal  &    Wasserstein & \ding{51}  & \ding{55} & -- \\
\cite{noyan2020distributionally} &  Optimal  & Wasserstein &  \ding{55} & \ding{55} & -- \\
\cite{fonseca2023decision}  &  Optimal  &   Wasserstein &  \ding{55} & \ding{55} & -- \\
\cite{xue2024distributionally} &  Stable  & KL divergence &\ding{55}  & \ding{55} & \ding{55} \\
\cite{jia2025distributionally} & Stable  & Wasserstein & \ding{55} & \ding{51} & \ding{55} \\  \hline
{\textbf{This work}} &  Stable  & Wasserstein & \ding{51} & \ding{51} & \ding{51} \\ \hline
\end{tabular}%
\smallskip
\flushleft
\textbf{RRM} assumes access to the explicit form of the function, whereas \textbf{RGD} relies on gradient estimates.
\vspace{-2em}
\end{table*}

\subsection{Our Contributions}

In this paper, we investigate distributionally robust performative prediction under limited data access, and provide both algorithmic solutions and theoretical analyses. 
Unlike standard performative prediction \cite{perdomo2020performative}, which optimizes the risk over the induced true distribution, we optimize the distributionally robust performative risk, which is defined as the worst-case expected loss over a Wasserstein ambiguity set centered at the empirical distribution.   
To ensure computational tractability, we adopt a Lagrangian relaxation with a decision-dependent penalty. 
This corresponds to an adjustable ambiguity set and subsumes the standard fixed formulation as a special case.
Leveraging strong duality \cite{sinha2017certifying}, we reformulate the DRO into a minmax optimization over a worst-case distribution. 
Building on concepts from \cite{perdomo2020performative}, we define the robust performative stable point and the robust performative optimum. Solving the performative optimum is generally difficult, as both the distribution and the loss depend on the decision variable. 
Thus, we focus on finding a robust performative stable point, and develop distributionally robust repeated risk minimization (DR-RRM) and repeated gradient descent (DR-RGD) algorithms to solve it. Both algorithms operate iteratively: at each step, the learner samples from the current distribution and identifies adversarial points via a robust surrogate objective. Based on the worst-case distribution induced by these adversarial points, the model parameters are updated either by explicitly solving the risk minimization problem (DR-RRM), or by constructing a gradient estimate to take a descent step (DR-RGD). We show that both algorithms converge under standard regularity conditions such as smoothness and Lipschitz continuity. 
Specifically, our analysis accounts for the approximation error introduced by finding the worst-case distribution. We demonstrate that in the presence of the approximation error, the iterations converge to a bounded neighborhood of the equilibrium. In the ideal case, when the error is zero, the algorithms converge to a unique robust performative stable point. Finally, we bound both the distance and the performative risk gap between the performative stable point and the optimum.

Our contributions can be summarized as follows:
(1) Framework: We establish a distributionally robust performative prediction framework and convert the DRO objective into a tractable minmax problem via strong duality.
(2) Algorithms: We develop DR-RRM and DR-RGD algorithms to iteratively solve the robust performative stable point (Algorithms~\ref{alg:risk minimization} and \ref{alg:gradient descent}).
(3) Convergence analysis: Under standard regularity assumptions, we prove that both algorithms converge to a unique stable point in the exact setting, and to its bounded neighborhood in the presence of the approximation error (Theorems~\ref{theo:risk minimization} and \ref{theo:gradient descent}).
(4) Suboptimality guarantee: We derive theoretical bounds on the distance and risk gap between the accessible performative stable point and the global performative optimum (Theorem~\ref{theorem:suboptimality}).

\subsection{Related literature}

Distributionally robust performative prediction, also studied under the name decision-dependent distributionally robust optimization, has been investigated in \cite{luo2020distributionally,qu2025decision,noyan2020distributionally,fonseca2023decision,xue2024distributionally,jia2025distributionally}.
The main distinction among these works lies in their solution concept and algorithmic objective. Specifically, \cite{luo2020distributionally,qu2025decision,noyan2020distributionally,fonseca2023decision} focus on performative optimality, whereas our paper studies robust performative stability of the repeated retraining dynamics.
Closest to our work are \cite{xue2024distributionally,jia2025distributionally}, which also investigate distributionally robust formulations for performative stability.
Specifically, \cite{xue2024distributionally} considers ambiguity sets based on the Kullback--Leibler (KL) divergence. In contrast, we study Wasserstein ambiguity sets, which capture geometric perturbations of the data distribution and allow support shifts. 
This difference leads to a different formulation, and the techniques developed in \cite{xue2024distributionally} are not directly applicable to our setting.
The work \cite{jia2025distributionally} studies performative stability under Wasserstein ambiguity, and proposes a repeated risk minimization algorithm with convergence guarantees.
However, their method assumes access to an exact minimization oracle at each iteration, which can be computationally intractable in certain settings. 
To address this, we develop a repeated gradient-descent algorithm that relaxes this requirement.
Moreover, in the performative setting, the ambiguity set is constructed from data collected after a model is deployed, and is therefore decision-dependent. 
In this paper, we consider a more general ambiguity set by allowing its size, or equivalently, the Lagrangian penalty, to depend on the decision model. 
Under this generalized ambiguity set, we establish convergence guarantees for both repeated risk minimization and repeated gradient descent, while explicitly accounting for approximation errors in the inner maximization problem.
A detailed comparison of our approach between existing baselines is provided in Table~\ref{tab:comparison}.

The remainder of this article is structured as follows: Section~\ref{sec:problem} outlines the preliminaries and problem formulation. Section~\ref{sec:alg} develops the DR-RRM and DR-RGD algorithms for the performative prediction and analyzes their convergence. Section~\ref{sec:analysis} analyzes the suboptimality gap between the performative stable point and the performative optimum. Section~\ref{sec:simulation} demonstrates the efficacy of the algorithms through numerical simulations. Section~\ref{sec:conclusion} concludes this work. Some of the proofs are given in the Appendix.

\section{Preliminaries and problem statement}
\label{sec:problem}

\subsection{Performative prediction}
Let $\theta \in \Theta$ denote the model parameter and $\Theta$ the parameter space. Let the random samples $\xi \in \Xi$ be drawn from a distribution $\mathbb{P}$, where  $\Xi \subset \mathbb{R}^d$ is a compact and convex set.
Denote the diameter of the admissible set $\Xi $ as $D_\xi = \sup_{x,y \in \Xi } \| x-y\|$. 
Performativity means that the deployment of a predictive model will affect the observed distribution, denoted by $\mathbb{P}(\theta)$.  
Given a loss function $l(\theta,\xi): \Theta\times \Xi \rightarrow \mathbb{R}$, the prediction performance is evaluated by performative risk $ \mathbb{E}_{\xi \sim \mathbb{P}(\theta)}[l(\theta,\xi)]$,  
which represents the expected loss over the distribution $\mathbb{P}(\theta)$ it induces.

\subsection{Distributionally robust optimization}
In real-world scenarios, the distribution of interest $\mathbb{P}(\theta)$ is often unobservable. We typically only have access to datasets that contain finite samples independently drawn from  $\mathbb{P}(\theta)$, which yields an empirical estimate $\hat{\mathbb{P}}(\theta)$. To mitigate the risk of distribution mismatch, we employ a DRO approach that minimizes the worst-case risk over an ambiguity set centered at the empirical distribution. We use optimal transport cost to quantify the discrepancy between a candidate distribution $\mathbb{P}$
and the empirical distribution $\hat{\mathbb{P}}(\theta)$: 
\begin{equation*}
\mathcal{W}_c\big(\mathbb{P},\hat{\mathbb{P}}(\theta)\big)=\inf_{\mathbb{D}\in \Gamma(\mathbb{P},\hat{\mathbb{P}}(\theta))}  \mathbb{E}_{(\xi,\zeta) \sim \mathbb{D}}[c(\xi,\zeta)] ,
\end{equation*}
where $\Gamma(\mathbb{P},\hat{\mathbb{P}}(\theta))$ denotes the set of joint distributions of random variables $(\xi,\zeta)$ with $\xi\sim \mathbb{P}$ and $\zeta \sim \hat{\mathbb{P}}(\theta)$. The transportation cost function $c: \Xi\times \Xi \rightarrow [0, \infty)$ is nonnegative, lower semi-continuous and satisfies $c(z,z)=0$, for $z \in \Xi$.
Denote $\mathcal{D}(\Xi)$ as the set of all distributions supported on $\Xi$.
The ambiguity set, defined as 
\begin{eqnarray}\label{eq:amb set}
 \mathcal{B}_{\rho(\theta)}(\hat{\mathbb{P}}(\theta)):=\{\mathbb{P}\in \mathcal{D}(\Xi): \mathcal{W}_c(\mathbb{P},\hat{\mathbb{P}}(\theta))\le \rho(\theta) \},   
\end{eqnarray}
contains all distributions within $\rho(\theta)$-distance from $\hat{\mathbb{P}}(\theta)$, where $\rho(\theta) \ge  0$ is a user-specified radius. In DRO, a larger $\rho$ represents higher robustness and a more conservative model. 

\begin{figure}[t!]
    \centering
    \resizebox{0.8\linewidth}{!}{%
    \begin{tikzpicture}[
        font=\footnotesize, 
        >=Stealth, 
        node distance=1.2cm, 
        box/.style={
            draw, 
            thick, 
            rectangle, 
            rounded corners=2pt, 
            align=center, 
            inner sep=6pt, 
            fill=white
        },
        line/.style={
            draw, 
            thick, 
            ->,
            rounded corners=5pt
        }
    ]


        \node[box] (loss) {
            DR performative risk \\ 
            $\sup_{ \mathbb{P}\in \mathcal{B}_{\rho(\theta)}(\hat{\mathbb{P}}(\theta))}  \mathbb{E}_{\xi \sim \mathbb{P}}[l(\theta,\xi)]$
        };

        \node[box, right=of loss] (decision) {Decision $\theta$};


        \node[box, below=1.0cm of $(loss.south)!0.5!(decision.south)$] (dist) {Distribution $\hat{\mathbb{P}}(\theta)$};

        \node[draw, dashed, thick, inner sep=15pt, fit=(loss) (decision), green!30!black, rounded corners=4pt] (container) {};

        \node[anchor=north east, green!30!black, font=\bfseries\footnotesize, inner sep=2pt, xshift=-2pt, yshift=-2pt] 
            at (container.north east) {Decision-maker};

        
        \draw[line] (loss.east) -- (decision.west);

        \draw[line] (decision.south) |- (dist.east);

        \draw[line] (dist.west) -| (loss.south);

    \end{tikzpicture}%
    }
    \caption{Distributionally robust (DR) performative prediction diagram. }
    \label{fig:dro_labeled}
\end{figure}

\subsection{Distributionally robust performative prediction}
Given the ambiguity set \eqref{eq:amb set}, the distributionally robust performative prediction is formulated as 
\begin{equation}\label{eq:primal DRO}
\mathop{\min}_{\theta\in \Theta} \sup_{ \mathbb{P}\in \mathcal{B}_{\rho(\theta)}(\hat{\mathbb{P}}(\theta))}  \mathbb{E}_{\xi \sim \mathbb{P}} [l(\theta,\xi)].
\end{equation}
Fig.~\ref{fig:dro_labeled} illustrates the distributionally robust performative prediction diagram.  
Deploying a decision $\theta$ induces an  empirical distribution $\hat{\mathbb{P}}(\theta)$. The decision-maker then computes a new model parameter $\theta'$ by minimizing the worst-case risk over a Wasserstein ambiguity set centered at $\hat{\mathbb{P}}(\theta)$. This updated decision $\theta'$ subsequently induces a new data distribution $\hat{\mathbb{P}}(\theta')$, thereby giving rise to a feedback loop between model updates and distributional shifts. 

Inspired by \cite{perdomo2020performative,jia2025distributionally}, we define two solution concepts for distributionally robust performative prediction: the distributionally robust performative optimum and stable point.
A decision $\theta_{\rm po}$ is distributionally robust performative optimal if it minimizes the robust performative risk: 
\begin{equation}\label{eq:performative opt}
\theta_{\rm po} \in   \mathop{\arg\min}_{\theta\in \Theta}\sup_{ \mathbb{P}\in \mathcal{B}_{\rho(\theta)}(\hat{\mathbb{P}}(\theta))}  \mathbb{E}_{\xi \sim \mathbb{P}} [l(\theta,\xi)].
\end{equation} 
In \eqref{eq:performative opt}, the parameter $\theta$ affects the objective in two ways: directly through the loss function $l(\theta,\xi)$, and indirectly through the ambiguity set $\mathcal{B}_{\rho(\theta)}(\hat{\mathbb{P}}(\theta))$ induced after its deployment. Therefore, computing $\theta_{\rm po}$ requires anticipating how the data distribution would respond to each candidate decision. Since $\hat{\mathbb{P}}(\theta)$ is typically observed only after $\theta$ has been deployed, solving \eqref{eq:performative opt} is generally difficult in practice.

As an alternative, we consider performative stability, which requires the decision to be optimal with respect to the distribution it induces. 
Formally, a decision $\theta_{\rm ps}$ is called a distributionally robust performative stable point if it satisfies:
\begin{equation*}
\theta_{\rm ps} \in \mathop{\arg\min}_{\theta\in \Theta} \sup_{ \mathbb{P}\in \mathcal{B}_{\rho(\theta_{\rm ps})}(\hat{\mathbb{P}}(\theta_{\rm ps}))}  \mathbb{E}_{\xi \sim \mathbb{P}} [l(\theta,\xi)].
\end{equation*}
In this definition, the ambiguity set is fixed by the model parameter $\theta{\rm ps}$, and the optimal solution of the resulting DRO problem is exactly $\theta{\rm ps}$. In other words, when the distribution changes in response to the deployed model, a stable point is a fixed point of the retraining procedure.

In general, the performative stable point and performative optimum do not necessarily coincide \cite{perdomo2020performative}.
Since finding the performative optimum is generally difficult, following the framework of \cite{perdomo2020performative}, this paper focuses on finding performative stable points. 

\section{Main results}\label{sec:alg}
In this section, we first reformulate the primal DRO problem \eqref{eq:primal DRO} as a minmax optimization problem via Lagrangian relaxation. Based on this formulation, we develop two algorithms to compute performative stable points, and analyze their convergence under standard regularity assumptions.

To ensure computational tractability, we consider a Lagrangian relaxation of \eqref{eq:primal DRO} \cite{sinha2017certifying}, with a penalty parameter $\lambda(\theta)$: 
\begin{equation}\label{eq:Lagrangian}
 \sup_{\mathbb{P}} \mathop\mathbb{E}_{\xi \sim \mathbb{P}} [l(\theta,\xi)] - \lambda(\theta) \mathcal{W}_c(\mathbb{P},\hat{\mathbb{P}}(\theta)) ,
\end{equation}
where $\lambda(\theta): \Theta \rightarrow \mathbb{R}$ varies with the decision $\theta$. 
Generally, $\lambda(\theta)$ is inversely related to $\rho(\theta)$, such that a larger $\lambda(\theta)$ indicates a smaller ambiguity set and less robustness.

We provide the following lemma to further reformulate \eqref{eq:Lagrangian} into a minmax optimization problem based on a robust surrogate function.
\begin{lemma}\cite{sinha2017certifying}\label{lemma:DRO}
Let functions $l(\theta,\xi):\Theta \times \Xi \rightarrow \mathbb{R}$ and $c(\xi,\zeta):\Xi \times \Xi \rightarrow \mathbb{R} $ be continuous. For any distribution $\hat{\mathbb{P}}(\theta)$ and $\lambda(\theta)\ge 0$, we have that  
\begin{align*} 
    \sup_{\mathbb{P}  } \Big\{ \mathop\mathbb{E}_{\xi \sim \mathbb{P}} [l(\theta,\xi)] - \lambda(\theta) \mathcal{W}_c(\mathbb{P},\hat{\mathbb{P}}(\theta))\Big\}  =  
\mathop\mathbb{E}_{\xi \sim \hat{\mathbb{P}}(\theta)} [f(\theta,\xi)]
\end{align*}
with $f(\theta,\xi) = \sup_{\zeta \in \Xi}  \big[l(\theta,\zeta) - \lambda(\theta) c(\xi,\zeta)\big]$.
\end{lemma}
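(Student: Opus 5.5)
The plan is to prove the two inequalities ``$\le$'' and ``$\ge$'' separately. Throughout, $\theta$ is fixed, so we abbreviate $\lambda=\lambda(\theta)\ge 0$, $\hat{\mathbb{P}}=\hat{\mathbb{P}}(\theta)$ and $\phi(\zeta)=l(\theta,\zeta)$. Since $\Xi$ is compact and $l$ and $c$ are continuous, the map $(\xi,\zeta)\mapsto \phi(\zeta)-\lambda c(\xi,\zeta)$ is continuous and bounded on $\Xi\times\Xi$. Hence $f(\theta,\xi)=\sup_{\zeta\in\Xi}[\phi(\zeta)-\lambda c(\xi,\zeta)]$ is finite, the supremum is attained, and $f(\theta,\cdot)$ is continuous, being a supremum over a compact set of a jointly continuous function. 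In particular it is measurable, and all expectations involved are well defined.

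\emph{Upper bound.} Take any $\mathbb{P}\in\mathcal{D}(\Xi)$ and any coupling $\mathbb{D}\in\Gamma(\mathbb{P},\hat{\mathbb{P}})$ with $(\zeta,\xi)\sim\mathbb{D}$, where $\zeta\sim\mathbb{P}$ and $\xi\sim\hat{\mathbb{P}}$. Pointwise we have $\phi(\zeta)-\lambda c(\xi,\zeta)\le f(\theta,\xi)$. Taking expectations under $\mathbb{D}$ gives
\begin{equation*}
\mathbb{E}_{\mathbb{P}}[\phi]-\lambda\,\mathbb{E}_{\mathbb{D}}[c]\le \mathbb{E}_{\hat{\mathbb{P}}}[f(\theta,\xi)].
\end{equation*}
Because $\lambda\ge0$, taking the supremum over couplings on the left (equivalently, the infimum of $\mathbb{E}_{\mathbb{D}}[c]$) yields $\mathbb{E}_{\mathbb{P}}[\phi]-\lambda\mathcal{W}_c(\mathbb{P},\hat{\mathbb{P}})\le\mathbb{E}_{\hat{\mathbb{P}}}[f]$. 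Taking the supremum over $\mathbb{P}$ then gives ``$\le$''. If the paper's $c$ is applied as $c(\xi,\zeta)$ with the arguments in the other order, the same argument goes through after relabeling.

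\emph{Lower bound.} Here we construct a near-optimal transport plan. The key step is a measurable selection $T:\Xi\to\Xi$ with $T(\xi)\in\arg\max_{\zeta}[\phi(\zeta)-\lambda c(\xi,\zeta)]$. Such a $T$ exists by the measurable maximum theorem (Kuratowski--Ryll-Nardzewski / Aliprantis--Border), since the objective is a Carath\'eodory function and $\Xi$ is compact. Now set $\mathbb{P}=T_\#\hat{\mathbb{P}}$ and use the coupling $\mathbb{D}=(T,\mathrm{id})_\#\hat{\mathbb{P}}$. Then $\mathcal{W}_c(\mathbb{P},\hat{\mathbb{P}})\le\mathbb{E}_{\hat{\mathbb{P}}}[c(\xi,T(\xi))]$, and therefore
\begin{equation*}
\mathbb{E}_{\mathbb{P}}[\phi]-\lambda\mathcal{W}_c(\mathbb{P},\hat{\mathbb{P}})\ge\mathbb{E}_{\hat{\mathbb{P}}}[\phi(T(\xi))-\lambda c(\xi,T(\xi))]=\mathbb{E}_{\hat{\mathbb{P}}}[f(\theta,\xi)].
\end{equation*}
This gives ``$\ge$''. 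In the setting that matters most in the paper, $\hat{\mathbb{P}}$ is empirical with finitely many atoms. In that case no selection theorem is needed: we simply pick one maximizer per atom, and the argument becomes elementary.

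The main obstacle is this measurability step for a general $\hat{\mathbb{P}}$. If we wish to avoid the selection theorem, the fallback is an $\epsilon$-argument. Using uniform continuity on the compact set $\Xi\times\Xi$, partition $\Xi$ into finitely many Borel cells of small diameter, and on each cell use a constant $\zeta$ that is a maximizer at a representative point. This gives a simple measurable map $T_\epsilon$ with $\phi(T_\epsilon(\xi))-\lambda c(\xi,T_\epsilon(\xi))\ge f(\theta,\xi)-\epsilon$. We then let $\epsilon\to0$.
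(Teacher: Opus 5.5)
Your proof is correct. The paper does not prove Lemma~\ref{lemma:DRO} itself; it imports the result from \cite{sinha2017certifying}. There the identity is derived on a general, not necessarily compact, space. One writes the left-hand side as a supremum over couplings whose nominal marginal is $\hat{\mathbb{P}}(\theta)$, and then interchanges supremum and integral through an interchangeability (measurable-selection) principle. Your argument is that interchange carried out by hand. The pointwise bound under an arbitrary coupling gives weak duality, and the push-forward by a measurable maximizer $T$ (or by your $\epsilon$-optimal simple map) gives the reverse inequality.

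What your route buys is self-containedness in the paper's actual setting. Compactness of $\Xi$ together with continuity makes the inner supremum attained and $f(\theta,\cdot)$ bounded and continuous, so no duality theory is needed. Your partition fallback even avoids the selection theorem. What the cited route buys is generality: it covers unbounded $\Xi$, where the inner supremum may fail to be attained or may equal $+\infty$ and only $\epsilon$-maximizers exist.

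One precision on your remark about argument order: it is more than a relabeling. Under the paper's literal conventions, $\mathcal{W}_c$ evaluates $c(\text{candidate},\text{nominal})$ while $f$ uses $c(\text{nominal},\text{candidate})$. With those conventions the identity fails for asymmetric costs. For example, take $\Xi=[0,1]$, $\hat{\mathbb{P}}(\theta)=\delta_0$, $c(x,y)=\max\{x-y,0\}$, $l(\theta,\zeta)=\zeta$ and $\lambda(\theta)=2$: the left side equals $0$ and the right side equals $1$. So what you have proved is the lemma with a consistent ordering. That agrees with the stated version for symmetric costs such as \eqref{eq:trans cost}, which is all the paper uses.
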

Lemma~\ref{lemma:DRO}  transforms the distributionally robust objective into a tractable minmax problem over adversarial points. This setup enables us to derive theoretical guarantees under assumptions on the loss $l(\cdot)$ and cost $c(\cdot)$.
By Lemma~\ref{lemma:DRO}, the optimum and the stable point of the reformulated distributionally robust performative prediction \eqref{eq:Lagrangian}, respectively, satisfy
\begin{subequations}\label{eq: p points}
 \begin{align}
\theta_o &\in \mathop{\rm{argmin}}_{\theta}  \mathop\mathbb{E}_{\xi \sim \hat{\mathbb{P}}(\theta)} [f(\theta,\xi)], \label{eq:p optimum}  \\ 
  \theta_s &\in \mathop{\rm{argmin}}_{\theta}  \mathop\mathbb{E}_{\xi \sim \hat{\mathbb{P}}(\theta_s)}  [f(\theta,\xi)] . \label{eq:p stable point}
\end{align}   
\end{subequations}
To evaluate the distribution shift induced by the model deployment, a commonly used metric is the Kantorovich-Rubinstein dual formulation of the 1-Wasserstein distance, as detailed in the following lemma.
\begin{lemma}\cite{edwards2011kantorovich}\label{lemma:wasserstein}
Let $\mathbb{P}_x$ and $\mathbb{P}_y$ be two probability distributions on probability space $\Omega$. The dual form of the Wasserstein distance
is given as
\begin{equation*}
    W_1(\mathbb{P}_x, \mathbb{P}_y)=\frac{1}{K} \sup _{\|f\|_L \leq K} \{ \mathbb{E}_{x \sim \mathbb{P}_x}[f(x)]-\mathbb{E}_{y \sim \mathbb{P}_y}[f(y)] \},
\end{equation*}
for any fixed $K >0$, where $\|\cdot\|_L$ is the Lipschitz norm.
\end{lemma}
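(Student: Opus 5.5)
The plan is to reduce the statement to the classical Kantorovich--Rubinstein theorem, i.e.\ the case $K=1$, and then obtain general $K>0$ by rescaling. Throughout, $\Omega$ is taken to be a metric space (in our setting the compact convex set $\Xi\subset\mathbb{R}^d$ with the Euclidean metric), $\|f\|_L$ is the smallest Lipschitz constant of $f$ with respect to that metric, and $\mathbb{P}_x,\mathbb{P}_y$ have finite first moments. The last condition holds automatically on $\Xi$ because $D_\xi<\infty$.

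\textbf{Step 1 (weak duality, $K=1$).} Fix any coupling $\mathbb{D}\in\Gamma(\mathbb{P}_x,\mathbb{P}_y)$ and any $f$ with $\|f\|_L\le 1$. Since the marginals of $\mathbb{D}$ are $\mathbb{P}_x$ and $\mathbb{P}_y$,
\begin{equation*}
\mathbb{E}_{\mathbb{P}_x}[f]-\mathbb{E}_{\mathbb{P}_y}[f]=\mathbb{E}_{(x,y)\sim\mathbb{D}}[f(x)-f(y)]\le \mathbb{E}_{(x,y)\sim\mathbb{D}}\|x-y\|.
\end{equation*}
Taking the supremum over $f$ and then the infimum over $\mathbb{D}$ gives $\sup_{\|f\|_L\le1}\{\cdot\}\le W_1(\mathbb{P}_x,\mathbb{P}_y)$.

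\textbf{Step 2 (strong duality, $K=1$).} This is the only substantive step. I would invoke general Kantorovich duality for a lower semicontinuous cost, here $c(x,y)=\|x-y\|$:
\begin{equation*}
W_1=\sup\{\mathbb{E}_{\mathbb{P}_x}[\varphi]+\mathbb{E}_{\mathbb{P}_y}[\psi]:\ \varphi(x)+\psi(y)\le \|x-y\|\}.
\end{equation*}
Any admissible pair can be improved by the $c$-transform $\varphi^c(y)=\inf_x\{\|x-y\|-\varphi(x)\}$. As an infimum of $1$-Lipschitz functions of $y$, $\varphi^c$ is $1$-Lipschitz. Because the cost is a metric, the double transform satisfies $\varphi^{cc}=-\varphi^c$. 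Hence the supremum may be restricted to pairs of the form $(f,-f)$ with $\|f\|_L\le1$, which yields equality with the right-hand side of Step~1.

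On compact $\Xi$, the existence of optimal potentials follows from Arzel\`a--Ascoli applied to normalized $1$-Lipschitz functions, say with $f(x_0)=0$. The one delicate point is the attainment and no-gap part of Kantorovich duality itself. I would cite it (e.g.\ \cite{edwards2011kantorovich}) rather than reprove it; it is where the real work lies, for instance via a minimax or Hahn--Banach argument on $C(\Xi\times\Xi)$.

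\textbf{Step 3 (rescaling to general $K$).} The map $g\mapsto Kg$ is a bijection from $\{\|g\|_L\le1\}$ onto $\{\|f\|_L\le K\}$, since $\|Kg\|_L=K\|g\|_L$. By linearity of expectation,
\begin{equation*}
\sup_{\|f\|_L\le K}\{\mathbb{E}_{\mathbb{P}_x}[f]-\mathbb{E}_{\mathbb{P}_y}[f]\}=K\sup_{\|g\|_L\le1}\{\mathbb{E}_{\mathbb{P}_x}[g]-\mathbb{E}_{\mathbb{P}_y}[g]\}=K\,W_1(\mathbb{P}_x,\mathbb{P}_y).
\end{equation*}
Dividing by $K>0$ gives the claimed formula for every fixed $K>0$.
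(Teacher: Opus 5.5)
Your proposal is correct and takes essentially the paper's approach. The paper gives no proof of its own and simply cites the classical Kantorovich--Rubinstein theorem (the case $K=1$) from \cite{edwards2011kantorovich}; your Step~3 rescaling $g\mapsto Kg$ is the only step needed beyond that citation, and your $c$-transform argument in Step~2 is the standard derivation behind it. Your reading of $\|f\|_L$ as the smallest Lipschitz constant, and of $\Omega$ as a metric space with finite first moments, is the interpretation under which the stated identity holds in general; the paper's wording ("probability space", "Lipschitz norm") leaves both points implicit.
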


Following \cite{perdomo2020performative}, we make the following assumptions. The first assumption restricts the sensitivity of the distribution map $\hat{\mathbb{P}}(\theta)$ using $W_1$ distance metric.
\begin{assumption}\label{assumption:distribution sensitivity}
The distribution $\hat{\mathbb{P}}(\cdot)$ is $\varepsilon$-sensitive, i.e., it satisfies
\begin{equation*} 
W_1(\hat{\mathbb{P}}(\theta),\hat{\mathbb{P}}(\theta')) \le \varepsilon\|\theta-\theta'\|_2    ,
\end{equation*}
for all $\theta$, $\theta' \in \Theta$, where $\|\cdot\|_2$ denotes the Euclidean norm. 
\end{assumption}
The following two assumptions impose strong convexity and smoothness conditions on the loss function.
\begin{assumption}\label{assumption:strongly convex}
The function $l(\theta,\zeta)$ is $m$-strongly convex in $\theta$, for every $\zeta \in \Xi$, where $m$ is a positive scalar.
\end{assumption}

\begin{assumption}\label{assumption:loss function}
The loss function $l(\theta, \zeta)$ is continuously differentiable and satisfies the following Lipschitzian smoothness conditions for all $\theta, \theta' \in \Theta$ and $\zeta, \zeta' \in \Xi$:
\begin{align*}
&\|\nabla_\theta l(\theta,\zeta)-\nabla_\theta l(\theta',\zeta)\| \le L_{\theta\theta}\|\theta-\theta'\|, \nonumber \\ 
&\|\nabla_\theta l(\theta,\zeta)-\nabla_\theta l(\theta,\zeta')\| \le L_{\theta\zeta}\|\zeta-\zeta'\|, \nonumber \\ 
&\|\nabla_\zeta l(\theta,\zeta)-\nabla_\zeta l(\theta',\zeta)\| \le L_{\zeta\theta}\|\theta-\theta'\|,
\end{align*} 
where $\nabla_\theta l $ and $\nabla_\zeta l $ denote the gradients of $l(\theta,\zeta)$ with respect to $\theta$ and $\zeta$, respectively.
\end{assumption}
Assumptions~\ref{assumption:distribution sensitivity}--\ref{assumption:loss function} are standard conditions to ensure the well-posedness and stability of the retraining dynamics \cite{perdomo2020performative}.
Following standard conventions in the literature, we adopt the squared Euclidean distance 
\begin{equation}\label{eq:trans cost}
    c(\xi,\zeta) = \|\xi-\zeta\|_2^2
\end{equation}
as the transportation cost. Under \eqref{eq:trans cost}, the generalized discrepancy $\mathcal{W}_c$ corresponds to the squared 2-Wasserstein metric. Unless explicitly stated otherwise, all unsubscripted norms denote the Euclidean norm.
To simplify notation, we denote 
\begin{equation}\label{eq:robust surrogate}
\phi(\theta,\xi,\zeta) =  l(\theta,\zeta) - \lambda(\theta) c(\xi,\zeta) .   
\end{equation} Therefore, it holds that $f(\theta,\xi) = \sup_{\zeta \in \Xi} \phi(\theta,\xi,\zeta)$.
Similar to Assumptions~\ref{assumption:distribution sensitivity}--\ref{assumption:loss function}, we require the function $\lambda(\theta)$ to be bounded and smooth, as formalized in the following assumption.
\begin{assumption}\label{assumption:lambda}
Denote $\nabla_{\theta}\lambda(\theta)$ as the gradient of the function $\lambda(\theta): \Theta \rightarrow \mathbb{R}$. We have that $0< \lambda_{\min} \le \lambda(\theta) \le \lambda_{\max}$, $\|\nabla_\theta 
\lambda(\theta)\| \le L_{\lambda}$, and $\|\nabla_\theta 
\lambda(\theta)- \nabla_\theta 
\lambda(\theta')\| \le H_{\lambda}\|\theta-\theta'\|$, for some positive constants $\lambda_{\min}$, $\lambda_{\max}$, $L_\lambda$ and $H_\lambda$. 
Furthermore, define $\gamma := m - H_\lambda D_\xi^2$, we have $\gamma>0$. This means that $\phi(\theta,\xi,\zeta)$ is $\gamma$-strongly convex in $\theta$. 

\end{assumption}

To ensure computational tractability, inspired by \cite{sinha2017certifying}, we make the following concave assumption: 
\begin{assumption}\label{assumption:concave}
For every fixed $\theta \in \Theta$ and $\xi\in \Xi$, the function $\phi(\theta,\xi, \zeta)$ is $\mu$-strongly concave in $\zeta \in \Xi$. 
\end{assumption}
\noindent Under Assumption~\ref{assumption:lambda} and the transportation cost \eqref{eq:trans cost}, it is sufficient for Assumption~\ref{assumption:concave} that either $l(\theta,\zeta)$ is concave in $\zeta$, or $\lambda(\theta)$ is sufficiently large so that the function $c(\xi,\zeta)$ dominates the concavity of $l(\theta,\zeta)$ in $\zeta$.

\subsection{Performative risk minimization}
This section develops the repeated risk minimization algorithm for distributionally robust performative prediction and analyzes its convergence to the performative stable point.

\begin{algorithm}[t] 
\caption{Distributionally robust repeated risk minimization} \label{alg:risk minimization}
\begin{algorithmic}[1]
    \REQUIRE Sampling distribution $\hat{\mathbb{P}}(\theta_0)$, constraint sets  $\Theta$ and $\Xi$
    \FOR{$ {\rm{time}} \;t = 0,\dots, T$} 
    \STATE  Sample $\xi_t\sim \hat{\mathbb{P}}(\theta_t)$
    \STATE  Find $\epsilon$-approximate maximizers $z(\theta_t,\xi_t)$, as in \eqref{eq:maximizer z} 
    \STATE  Update decision $\theta_{t+1} $ via \eqref{eq:Alg risk minimization}
    \ENDFOR
\end{algorithmic}
\end{algorithm}
 
At iteration $t$, we observe $\xi_t$, which is sampled from the true distribution $\mathbb{P}(\theta_{t})$. This realization constitutes a empirical distribution estimate $\hat{\mathbb{P}}(\theta_{t})$. 
Then, the algorithm updates the model by minimizing the worst-case risk over the ambiguity set centered at the distribution $\hat{\mathbb{P}}(\theta_{t})$. 
By Lemma~\ref{lemma:DRO}, we first compute a maximizer for the inner objective function: 
\begin{align}\label{eq:maximizer z}
\zeta^\ast(\theta_t,\xi_t) = \mathop{\arg\max}_{\zeta\in \Xi} \phi(\theta_t,\xi_t,\zeta) ,
\end{align}
where $\zeta^\ast(\theta_t,\xi_t)$ represents a realization of the worst-case distribution.  
By the strongly concavity of $\phi(\theta,\xi,\cdot)$, $\zeta^\ast(\theta_t,\xi_t)$ is unique for every pair $(\theta_t,\xi_t)$.
Ideally, when the function $\phi$ admits a closed-form solution, an exact maximizer $ \zeta^{\ast}(\theta_t,\xi_t)$ can be derived.  
However, when its explicit form is unknown,  the maximizer is obtained numerically. Denote $\mathcal{Z}_\epsilon(\theta, \xi) = \{ \zeta \mid \|\zeta - \zeta^*(\theta, \xi)\| \le \epsilon \}$ as the local uncertainty set around $\zeta^*(\theta, \xi)$,  where $\epsilon>0$ is the tolerance. Then, the maximizer obtained numerically is denoted as $z(\theta_t,\xi_t) \in \mathcal{Z}_\epsilon(\theta_t, \xi_t)$.
We retain the $\epsilon$-approximation formulation to ensure the generality of the framework.
Finally, we update the model parameter $\theta_t$ by minimizing distributionally robust performative risk: 
\begin{equation}\label{eq:Alg risk minimization}
\theta_{t+1}   = \mathop{{\arg\min}}_{\theta\in \Theta} \mathop{\mathbb{E}}_{\xi_t\sim\hat{\mathbb{P}}(\theta_t)}\big[ \phi(\theta,\xi_t,z(\theta_t,\xi_t))\big],
\end{equation} 
with initial value $\theta_0$.
To summarize, as outlined in Algorithm \ref{alg:risk minimization}, the procedure iteratively identifies adversarial points $z(\theta_t,\xi_t)$ and updates the model parameters $\theta_t$ by minimizing risk evaluated on these adversarial points.

Consider the scenario in which \eqref{eq:Alg risk minimization} is solved without approximation error, this minimization problem is still not identical to \eqref{eq:p stable point}, as the random variable $\xi_t$ in adversarial points $\zeta^\ast(\theta_t,\xi_t)$ follows the distribution $\hat{\mathbb{P}}(\theta_t)$.
However, when the iteration sequence generated by \eqref{eq:Alg risk minimization} converges to a stable point $ \theta_s$, it satisfies $\theta_s = \mathop{{\arg\min}}_{\theta\in \Theta} \mathop{\mathbb{E}}_{\xi \sim\hat{\mathbb{P}}(\theta_s)}\big[ \phi(\theta,\xi ,z(\theta_s,\xi ))\big]$. 
This solution coincides with the definition of the performative stable point in \eqref{eq:p stable point}.

In the following lemmas, we analyze the Lipschitzian smoothness
conditions of functions $\phi$ and $f$, respectively. They are used for the convergence analysis of Algorithm~\ref{alg:risk minimization}.
\begin{lemma}\label{lemma:phi Lipschitzs}
Let Assumptions~\ref{assumption:loss function}--\ref{assumption:lambda} hold. 
Denote $\nabla_\theta\phi$ and $\nabla_\zeta \phi$ as the gradient of $\phi$ with respect to $\theta$ and $\zeta$, respectively. 
For all $\theta,\theta' \in \Theta$, $\xi,\xi',\zeta,\zeta' \in \Xi$, the function $\phi(\theta,\xi,\zeta)$ satisfies 
\begin{subequations}
\begin{align}
&\|\nabla_\theta\phi(\theta,\xi,\zeta)-\nabla_\theta\phi(\theta',\xi,\zeta)\| \le L_{\theta\theta}^\phi\|\theta-\theta'\| ,\label{eq:phi lip theta_theta} \\
&\|\nabla_\theta\phi(\theta,\xi,\zeta)-\nabla_\theta\phi(\theta,\xi',\zeta)\| \le L_{\theta\xi}^\phi\|\xi-\xi'\| \label{eq:phi lip theta_xi}, \\
&\|\nabla_\theta\phi(\theta,\xi,\zeta)-\nabla_\theta\phi(\theta,\xi,\zeta')\| \le L_{\theta\zeta}^\phi\|\zeta-\zeta'\| ,\label{eq:phi lip theta_zeta} \\
&\|\nabla_\zeta\phi(\theta,\xi,\zeta)-\nabla_\zeta\phi(\theta',\xi,\zeta)\| \le L_{\zeta\theta}^\phi\|\theta-\theta'\|  \label{eq:phi lip zeta_theta}, \\ 
&\|\nabla_\zeta\phi(\theta,\xi,\zeta)-\nabla_\zeta\phi(\theta,\xi',\zeta)\| \le L_{\zeta\xi}^\phi\|\xi-\xi'\|,  \label{eq:phi lip zeta_xi}
\end{align}
\end{subequations}
 with $L_{\theta\theta}^\phi= L_{\theta\theta}+H_\lambda D_\xi^2$, $L_{\theta\xi}^\phi= 2L_\lambda D_\xi$, $L_{\theta\zeta}^\phi=L_{\theta\zeta}+2L_{\lambda}D_\xi$,  $L_{\zeta\theta}^\phi=L_{\zeta\theta} + 2L_\lambda D_\xi $, and $ L_{\zeta\xi}^\phi = 2\lambda_{\max}$.  
\end{lemma}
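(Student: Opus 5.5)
The plan is to compute the two partial gradients of $\phi$ explicitly and then bound each difference term by term, using Assumptions~\ref{assumption:loss function} and \ref{assumption:lambda} together with the compactness of $\Xi$. With the cost \eqref{eq:trans cost},
\begin{align*}
\nabla_\theta\phi(\theta,\xi,\zeta) &= \nabla_\theta l(\theta,\zeta) - \|\xi-\zeta\|^2\,\nabla_\theta\lambda(\theta),\\
\nabla_\zeta\phi(\theta,\xi,\zeta) &= \nabla_\zeta l(\theta,\zeta) + 2\lambda(\theta)(\xi-\zeta).
\end{align*}
Throughout, I will use that $\|\xi-\zeta\|\le D_\xi$ for all $\xi,\zeta\in\Xi$.

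For \eqref{eq:phi lip theta_theta}, I will split the difference into the loss part and the penalty part. The loss part is bounded by $L_{\theta\theta}\|\theta-\theta'\|$ by Assumption~\ref{assumption:loss function}. The penalty part is $\|\xi-\zeta\|^2\|\nabla_\theta\lambda(\theta)-\nabla_\theta\lambda(\theta')\|\le D_\xi^2H_\lambda\|\theta-\theta'\|$ by Assumption~\ref{assumption:lambda}.

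For \eqref{eq:phi lip theta_xi}, only the penalty term changes, giving $\|\nabla_\theta\lambda\|\,\big|\|\xi-\zeta\|^2-\|\xi'-\zeta\|^2\big|$. Writing $a^2-b^2=(a+b)(a-b)$ and using the reverse triangle inequality, this is at most $2D_\xi\|\xi-\xi'\|$ times $L_\lambda$. The same factorization in $\zeta$ handles \eqref{eq:phi lip theta_zeta}. That case has the extra loss term $L_{\theta\zeta}\|\zeta-\zeta'\|$, so the constant is $L_{\theta\zeta}+2L_\lambda D_\xi$.

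For \eqref{eq:phi lip zeta_theta}, the loss part gives $L_{\zeta\theta}\|\theta-\theta'\|$. The penalty part is $2|\lambda(\theta)-\lambda(\theta')|\,\|\xi-\zeta\|$. By the mean value theorem, $|\lambda(\theta)-\lambda(\theta')|\le L_\lambda\|\theta-\theta'\|$, assuming $\Theta$ is convex or extending along segments. This gives the bound $2L_\lambda D_\xi\|\theta-\theta'\|$.

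For \eqref{eq:phi lip zeta_xi}, the loss term cancels. What remains is $2\lambda(\theta)\|\xi-\xi'\|\le 2\lambda_{\max}\|\xi-\xi'\|$.

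The only mildly delicate points are the difference-of-squares step and the mean-value step for $\lambda$, which needs $\Theta$ convex. I expect no real obstacle; this is routine bookkeeping.
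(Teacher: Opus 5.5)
Your proposal is correct and follows the paper's proof essentially step for step: you compute the partial gradients explicitly, split each difference into a loss part and a penalty part, and use $\|\xi-\zeta\|\le D_\xi$ together with $\bigl|\|\xi-\zeta\|^2-\|\xi'-\zeta\|^2\bigr|\le 2D_\xi\|\xi-\xi'\|$, which the paper obtains by expanding $(\xi-\xi')^\top(\xi+\xi'-2\zeta)$ rather than by your equivalent $a^2-b^2=(a+b)(a-b)$ factorization. Your remark that $|\lambda(\theta)-\lambda(\theta')|\le L_\lambda\|\theta-\theta'\|$ needs $\Theta$ convex is correct; the paper uses this step without comment when it cites the ``Lipschitzness of $\lambda(\cdot)$'' from Assumption~\ref{assumption:lambda}, which only bounds the gradient.
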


\begin{lemma}\label{lemma:f Lipschitz}
Let Assumptions~\ref{assumption:loss function}--\ref{assumption:concave} hold.  We have that $\nabla_\theta f(\theta,\xi)  = \nabla_\theta\phi(\theta,\xi,\zeta^\ast(\theta,\xi))$. Moreover, the following claims hold:
\begin{enumerate}
\item $\|\zeta^\ast(\theta,\xi)- \zeta^\ast(\theta',\xi)\| \le L_{\zeta\theta}^\phi/\mu\|\theta-\theta'\|$, and 
$\|\nabla_\theta f(\theta,\xi)-\nabla_\theta f(\theta',\xi)\| \le L_{\theta\theta}^f\|\theta-\theta' \| $   with $L_{\theta\theta}^f= L_{\theta\theta}^\phi +L_{\theta\zeta}^\phi L_{\zeta\theta}^\phi /\mu$;
\item $\|\zeta^\ast(\theta,\xi)- \zeta^\ast(\theta,\xi')\| \le L_{\zeta\xi}^\phi/\mu\|\xi-\xi'\|$, and 
$\|\nabla_\theta f(\theta,\xi)-\nabla_\theta f(\theta,\xi')\| \le L_{\theta\xi}^f\|\xi-\xi' \| $ with $L_{\theta\xi}^f = L_{\theta\xi}^\phi + L_{\theta\zeta}^\phi L_{\zeta\xi}^\phi/\mu$.
\end{enumerate}
\end{lemma}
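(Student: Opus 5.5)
The plan has three parts: a Danskin-type argument for the gradient formula, a strong-concavity argument that $\zeta^\ast$ is Lipschitz, and a triangle inequality that combines the two.

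\textbf{Gradient formula.} Under Assumption~\ref{assumption:concave}, $\phi(\theta,\xi,\cdot)$ is strongly concave on the compact convex set $\Xi$. Hence the maximizer $\zeta^\ast(\theta,\xi)$ exists and is unique. The function $\phi$ is continuously differentiable in $\theta$, because $l$ is $C^1$ by Assumption~\ref{assumption:loss function}, $\lambda$ is $C^1$ by Assumption~\ref{assumption:lambda}, and $c$ does not depend on $\theta$. Danskin's theorem then gives $\nabla_\theta f(\theta,\xi)=\nabla_\theta\phi(\theta,\xi,\zeta^\ast(\theta,\xi))$. This is the routine step.

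\textbf{Lipschitz continuity of $\zeta^\ast$.} I will use the variational-inequality characterization of constrained maximizers. For $\zeta_1=\zeta^\ast(\theta,\xi)$ and $\zeta_2=\zeta^\ast(\theta',\xi)$, optimality over the convex set $\Xi$ gives
\begin{align*}
\langle \nabla_\zeta\phi(\theta,\xi,\zeta_1),\zeta_2-\zeta_1\rangle\le 0,\qquad \langle \nabla_\zeta\phi(\theta',\xi,\zeta_2),\zeta_1-\zeta_2\rangle\le 0 .
\end{align*}
Strong concavity of $\phi(\theta,\xi,\cdot)$ gives
\begin{align*}
\langle \nabla_\zeta\phi(\theta,\xi,\zeta_1)-\nabla_\zeta\phi(\theta,\xi,\zeta_2),\zeta_1-\zeta_2\rangle\le -\mu\|\zeta_1-\zeta_2\|^2 .
\end{align*}
Adding the two optimality inequalities and inserting $\pm\nabla_\zeta\phi(\theta,\xi,\zeta_2)$, I expect
\begin{align*}
\mu\|\zeta_1-\zeta_2\|^2\le \langle \nabla_\zeta\phi(\theta',\xi,\zeta_2)-\nabla_\zeta\phi(\theta,\xi,\zeta_2),\zeta_1-\zeta_2\rangle .
\end{align*}
Cauchy--Schwarz together with \eqref{eq:phi lip zeta_theta} of Lemma~\ref{lemma:phi Lipschitzs} then yields $\|\zeta_1-\zeta_2\|\le (L^\phi_{\zeta\theta}/\mu)\|\theta-\theta'\|$. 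The same argument with $\xi$ varying instead of $\theta$ uses \eqref{eq:phi lip zeta_xi} and gives the constant $L^\phi_{\zeta\xi}/\mu$.

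\textbf{Gradient Lipschitz bounds.} Using the gradient formula, I split with a triangle inequality:
\begin{align*}
\|\nabla_\theta f(\theta,\xi)-\nabla_\theta f(\theta',\xi)\|\le{}& \|\nabla_\theta\phi(\theta,\xi,\zeta_1)-\nabla_\theta\phi(\theta',\xi,\zeta_1)\| \\
&+\|\nabla_\theta\phi(\theta',\xi,\zeta_1)-\nabla_\theta\phi(\theta',\xi,\zeta_2)\| .
\end{align*}
The first term is bounded through \eqref{eq:phi lip theta_theta}. The second is bounded through \eqref{eq:phi lip theta_zeta} combined with the $\zeta^\ast$ bound above. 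The result is $L^f_{\theta\theta}=L^\phi_{\theta\theta}+L^\phi_{\theta\zeta}L^\phi_{\zeta\theta}/\mu$. The $\xi$-case is analogous, using \eqref{eq:phi lip theta_xi} and \eqref{eq:phi lip theta_zeta}, and gives $L^f_{\theta\xi}$.

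\textbf{Main obstacle.} The only delicate point is that the maximizer may lie on the boundary of $\Xi$, where the gradient need not vanish. This is why I use the variational inequalities rather than first-order stationarity. I also need the strong-concavity constant $\mu$ to hold uniformly in $(\theta,\xi)$, which Assumption~\ref{assumption:concave} supplies.
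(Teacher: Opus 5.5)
Your proposal is correct and follows essentially the same route as the paper's proof. It uses Danskin's theorem for the gradient formula, strong concavity together with the two first-order optimality conditions over $\Xi$ for the $L^\phi_{\zeta\theta}/\mu$ and $L^\phi_{\zeta\xi}/\mu$ bounds on $\zeta^\ast$, and the same triangle-inequality split for $\nabla_\theta f$; the paper phrases the strong-concavity step through function values rather than gradient monotonicity, which makes no real difference.

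There is one small sign slip. Inserting $\pm\nabla_\zeta\phi(\theta,\xi,\zeta_2)$ actually gives $\mu\|\zeta_1-\zeta_2\|^2\le\langle\nabla_\zeta\phi(\theta,\xi,\zeta_2)-\nabla_\zeta\phi(\theta',\xi,\zeta_2),\zeta_1-\zeta_2\rangle$, the opposite sign from what you wrote. This is harmless, because you only use the inequality through Cauchy--Schwarz.
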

The proofs of Lemmas~\ref{lemma:phi Lipschitzs} and~\ref{lemma:f Lipschitz} are given in the Appendix. 
We need to distinguish the smoothness of $\nabla_{\theta}\phi$ from that of $\nabla_{\theta}f$. While the former treats $\zeta$ as fixed, the latter accounts for the implicit dependence of $\zeta^\ast(\theta,\xi)$ on $\theta$.

In the following, we show that the repeated risk minimization operation is a contraction mapping, and then analyze the convergence rate of Algorithm~\ref{alg:risk minimization}. 
\begin{theorem}\label{theo:risk minimization} Let Assumptions~\ref{assumption:distribution sensitivity}--\ref{assumption:concave} hold.  
Denote $ \kappa_{\rm rm} = \big(L_{\theta\xi}^f\varepsilon+ L_{\theta\zeta}^\phi L_{\zeta\theta}^\phi/\mu\big)/\gamma  $. 
When $\kappa_{\rm rm}<1$, we have that
\begin{enumerate}
\item The sequence $\theta_t$ generated by \eqref{eq:Alg risk minimization} converges linearly to a neighborhood of the unique stable point $\theta_s$: 
\begin{equation*} 
    \|\theta_t - \theta_s \| \le \kappa_{\rm rm}^t \|\theta_0 - \theta_s\| + \frac{ L_{\theta\zeta}^\phi \epsilon}{\gamma(1-\kappa_{\rm rm})},
\end{equation*} 
for all $t \ge 0$. 
\item When the inner maximization is exact (i.e., $\epsilon=0$), the sequence $\theta_t$ converges linearly to the fixed point $\theta_s$ with rate $\kappa_{\rm rm}$.
\end{enumerate}
\end{theorem}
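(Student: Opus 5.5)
We plan to treat the update \eqref{eq:Alg risk minimization} with exact maximizers as a map $G(\theta') := \arg\min_{\theta\in\Theta} \mathbb{E}_{\xi\sim\hat{\mathbb{P}}(\theta')}[\phi(\theta,\xi,\zeta^\ast(\theta',\xi))]$. We will show that $G$ is a $\kappa_{\rm rm}$-contraction, and then treat the $\epsilon$-approximate maximizer as a bounded perturbation of $G$. Stable points are exactly fixed points of $G$. Indeed, if $\theta_s=G(\theta_s)$, then the first-order optimality condition uses $\nabla_\theta\phi(\theta_s,\xi,\zeta^\ast(\theta_s,\xi))=\nabla_\theta f(\theta_s,\xi)$ from Lemma~\ref{lemma:f Lipschitz}. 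Together with convexity of $f$ in $\theta$ (a supremum of $\gamma$-strongly convex functions, by Assumption~\ref{assumption:lambda}), this gives \eqref{eq:p stable point}. Existence and uniqueness of $\theta_s$ then follow from Banach's fixed point theorem once contraction is shown, assuming $\Theta$ is closed.

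For the contraction step, fix $\theta_1,\theta_2$ and let $\theta_i^+=G(\theta_i)$. Write $g_i(\theta)=\mathbb{E}_{\xi\sim\hat{\mathbb{P}}(\theta_i)}[\phi(\theta,\xi,\zeta^\ast(\theta_i,\xi))]$; each $g_i$ is $\gamma$-strongly convex. Combining strong convexity with the first-order optimality conditions of $\theta_1^+$ for $g_1$ and $\theta_2^+$ for $g_2$ gives the standard inequality
\[
\gamma\|\theta_1^+-\theta_2^+\|^2 \le (\theta_2^+-\theta_1^+)^\top\big(\nabla g_1(\theta_2^+)-\nabla g_2(\theta_2^+)\big).
\]
We then split $\nabla g_1(\theta_2^+)-\nabla g_2(\theta_2^+)$ into two terms.

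\textbf{(a) Distribution shift.} This is the difference between $\mathbb{E}_{\hat{\mathbb{P}}(\theta_1)}$ and $\mathbb{E}_{\hat{\mathbb{P}}(\theta_2)}$ of $h(\xi):=(\theta_2^+-\theta_1^+)^\top\nabla_\theta\phi(\theta_2^+,\xi,\zeta^\ast(\theta_1,\xi))$. By \eqref{eq:phi lip theta_xi}, \eqref{eq:phi lip theta_zeta} and Lemma~\ref{lemma:f Lipschitz}(2), $h$ is Lipschitz in $\xi$ with constant $\|\theta_2^+-\theta_1^+\|(L^\phi_{\theta\xi}+L^\phi_{\theta\zeta}L^\phi_{\zeta\xi}/\mu) = \|\theta_2^+-\theta_1^+\|L^f_{\theta\xi}$. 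Lemma~\ref{lemma:wasserstein} and Assumption~\ref{assumption:distribution sensitivity} then bound this term by $L^f_{\theta\xi}\varepsilon\|\theta_1-\theta_2\|\,\|\theta_1^+-\theta_2^+\|$.

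\textbf{(b) Adversarial-point change.} This is $\mathbb{E}_{\hat{\mathbb{P}}(\theta_2)}$ of $\nabla_\theta\phi(\theta_2^+,\xi,\zeta^\ast(\theta_1,\xi))-\nabla_\theta\phi(\theta_2^+,\xi,\zeta^\ast(\theta_2,\xi))$. By \eqref{eq:phi lip theta_zeta} and Lemma~\ref{lemma:f Lipschitz}(1), it is bounded by $L^\phi_{\theta\zeta}L^\phi_{\zeta\theta}/\mu\,\|\theta_1-\theta_2\|$.

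Dividing by $\gamma\|\theta_1^+-\theta_2^+\|$ yields $\|G(\theta_1)-G(\theta_2)\|\le\kappa_{\rm rm}\|\theta_1-\theta_2\|$. The main care point is (a): the integrand must be arranged so that only one argument varies with the distribution, with the adversarial map $\zeta^\ast(\theta_1,\cdot)$ held fixed, since its $\xi$-dependence is exactly what produces $L^f_{\theta\xi}$.

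For the perturbation step, let $\tilde G(\theta_t)$ denote the actual update using $z(\theta_t,\xi)\in\mathcal{Z}_\epsilon(\theta_t,\xi)$. The same strong-convexity argument, now with identical distributions, compares the minimizers of two objectives that differ only in $\zeta$. By \eqref{eq:phi lip theta_zeta} this gives $\|\tilde G(\theta_t)-G(\theta_t)\|\le L^\phi_{\theta\zeta}\epsilon/\gamma$. Hence
\[
\|\theta_{t+1}-\theta_s\|\le\|\tilde G(\theta_t)-G(\theta_t)\|+\|G(\theta_t)-G(\theta_s)\|\le \kappa_{\rm rm}\|\theta_t-\theta_s\|+L^\phi_{\theta\zeta}\epsilon/\gamma.
\]
Unrolling this recursion and summing the geometric series gives claim 1, and setting $\epsilon=0$ gives claim 2.
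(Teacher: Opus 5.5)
Your proposal is correct and follows the same route as the paper's proof. You show the exact update map $G$ is a $\kappa_{\rm rm}$-contraction using strong convexity plus first-order optimality. You split the gradient gap into a distribution-shift term, bounded by Kantorovich--Rubinstein duality with Lipschitz constant $L^f_{\theta\xi}$, and an adversarial-point term, bounded by $L^\phi_{\theta\zeta}L^\phi_{\zeta\theta}/\mu$. You then absorb the $\epsilon$-approximate maximizer as an $L^\phi_{\theta\zeta}\epsilon/\gamma$ perturbation before unrolling the recursion.

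Two small points go slightly beyond what the paper writes out. You scalarize the integrand against $\theta_2^+-\theta_1^+$ before invoking Lemma~\ref{lemma:wasserstein}, which is cleaner than the paper's direct application to a vector-valued function. You also verify explicitly that fixed points of $G$ are stable points, and you note that closedness of $\Theta$ is needed for Banach's theorem; the paper leaves both implicit.
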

The proof of Theorem~\ref{theo:risk minimization} is given in the Appendix. 
\begin{remark}
Theorem~\ref{theo:risk minimization} demonstrates that the iterates $\theta_t$ converge faster if the function is smooth, strongly convex in $\theta$, and strongly concave in $\zeta$ (smaller  $L_{\theta\theta}^\phi$, $L_{\theta\zeta}^\phi$ and $L_{\zeta\theta}^\phi$,  larger $\gamma$ and $\mu$), and the distribution map is insensitive (smaller $\varepsilon$). 
Specifically, the distribution shift induced by the update of $\theta$ must be sufficiently small such that the algorithm can consistently contract toward the stable point.
Moreover, due to the approximation error $\epsilon$, the sequence converges to a neighborhood of the stable point. 
\end{remark}

\subsection{Performative gradient descent}
Repeated risk minimization assumes access to an exact optimization oracle, which is often impractical. In this section, we relax this requirement, and develop a gradient descent approach to find the stable point. 

Similar to Algorithm~\ref{alg:risk minimization}, we update the decision based on the empirical distribution $\hat{\mathbb{P}}(\theta_t)$ resulting from the previous model $\theta_t$, but instead utilize gradient descent to optimize the performative risk.
\begin{algorithm}[t] 
\caption{Distributionally robust repeated gradient descent} \label{alg:gradient descent}
\begin{algorithmic}[1]
    \REQUIRE Sampling distribution $\hat{\mathbb{P}}(\theta_0)$, constraint sets $\Theta$ and  $\Xi$, step size $\eta$
    \FOR{$ {\rm{time}} \;t = 0,\dots, T$} 
    \STATE  Sample $\xi_t\sim \hat{\mathbb{P}}( \theta_t)$
    \STATE  Find $\epsilon$-approximate maximizers $z(\theta_t,\xi_t)$, as in \eqref{eq:maximizer z}
    \STATE Construct gradient estimate $g_t$, as in \eqref{eq:gradient estimate}  
    \STATE Update decision $\theta_{t+1}$ via \eqref{eq:repeat gradient descent}
    \ENDFOR
\end{algorithmic}
\end{algorithm}
At iteration $t$, we observe $\xi_t \sim \hat{\mathbb{P}}(\theta_t)$. Then, we find the $\epsilon$-approximate maximizer $z(\theta_t,\xi_t)$ of the function $\phi(\theta_t,\xi_t,\cdot)$ via \eqref{eq:maximizer z}. 
Given $ \phi(\theta_t,\xi_t,z(\theta_t,\xi_t))$, the gradient estimate is given as 
\begin{equation}\label{eq:gradient estimate}
g_t = \mathbb{E}_{\xi_t} \big[\nabla_\theta  \phi(\theta_t,\xi_t,z(\theta_t,\xi_t))\big].
\end{equation}
Then, the decision is updated according to 
\begin{equation}\label{eq:repeat gradient descent}
\theta_{t+1} = {\rm Proj}_{\Theta}\big(\theta_t - \eta g_t\big),
\end{equation}
with $\theta_0$ is the initial value, 
 $\eta>0$ the step size, and ${\rm Proj}_{\Theta}$ the projection operator onto $\Theta$. 
Algorithm~\ref{alg:gradient descent} outlines the repeated gradient descent method, which iteratively identifies the adversarial points  $z(\theta_t,\xi_t)$ and updates the decision via the gradient descent \eqref{eq:repeat gradient descent}. 

The following theorem analyzes the convergence of Algorithm~\ref{alg:gradient descent}. 
\begin{theorem}\label{theo:gradient descent} Let Assumptions~\ref{assumption:distribution sensitivity}--\ref{assumption:concave} hold.  Denote $\beta =L_{\theta\theta}^f+ \varepsilon L_{\theta\xi}^f $, and   
$\kappa_{\rm gd}=\sqrt{\eta^2\beta^2+2\eta(\varepsilon L_{\theta\xi}^f-\gamma)+1}$. When $\gamma>\varepsilon L_{\theta\xi}^f$, and the step size $0<\eta<2(\gamma-\varepsilon L_{\theta\xi}^f)/\beta^2$, we have $\kappa_{\rm gd}<1$, and the following claims:
\begin{enumerate}
\item The sequence $\theta_t$ generated by \eqref{eq:repeat gradient descent} converges linearly to a neighborhood of $\theta_s$ with rate  $\kappa_{\rm gd}$:
\begin{equation*}
\|\theta_t - \theta_s\| \le \kappa_{\rm gd}^t\|\theta_0-\theta_s\| + \frac{ \eta L_{\theta\zeta}^\phi \epsilon}{1-\kappa_{\rm gd}},
\end{equation*}
for all $t \ge 0$. 
\item When the inner maximization is exact, i.e., $\epsilon=0$, the sequence $\theta_t$ converges linearly to the fixed point $\theta_s$ with rate $\kappa_{\rm gd}$.
\end{enumerate}
\end{theorem}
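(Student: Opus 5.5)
We follow the standard repeated-gradient-descent argument of \cite{perdomo2020performative}: show that the exact map $G(\theta)={\rm Proj}_\Theta\big(\theta-\eta\,\mathbb{E}_{\xi\sim\hat{\mathbb{P}}(\theta)}[\nabla_\theta f(\theta,\xi)]\big)$ is a contraction with modulus $\kappa_{\rm gd}$ that fixes $\theta_s$, and then treat the approximate maximizer as an additive perturbation. By Lemma~\ref{lemma:f Lipschitz}, $\nabla_\theta f(\theta,\xi)=\nabla_\theta\phi(\theta,\xi,\zeta^\ast(\theta,\xi))$, so when $\epsilon=0$ the iteration \eqref{eq:repeat gradient descent} is exactly $\theta_{t+1}=G(\theta_t)$. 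The stable point $\theta_s$ exists and is unique by Theorem~\ref{theo:risk minimization}, or alternatively by the contraction property established below. Since $\theta_s$ minimizes the strongly convex function $\mathbb{E}_{\xi\sim\hat{\mathbb{P}}(\theta_s)}[f(\cdot,\xi)]$ over $\Theta$, the first-order optimality condition gives $\theta_s=G(\theta_s)$.

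\textbf{Contraction step.} Write $h_\theta=\mathbb{E}_{\xi\sim\hat{\mathbb{P}}(\theta)}[\nabla_\theta f(\theta,\xi)]$. Nonexpansiveness of the projection gives $\|G(\theta)-G(\theta_s)\|^2\le \|\theta-\theta_s\|^2-2\eta\langle h_\theta-h_{\theta_s},\theta-\theta_s\rangle+\eta^2\|h_\theta-h_{\theta_s}\|^2$. We split
\[
h_\theta-h_{\theta_s}=\big(\mathbb{E}_{\hat{\mathbb{P}}(\theta)}[\nabla_\theta f(\theta,\xi)]-\mathbb{E}_{\hat{\mathbb{P}}(\theta_s)}[\nabla_\theta f(\theta,\xi)]\big)+\mathbb{E}_{\hat{\mathbb{P}}(\theta_s)}[\nabla_\theta f(\theta,\xi)-\nabla_\theta f(\theta_s,\xi)].
\]
The first bracket is a distribution-shift term. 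For any unit vector $u$, the map $\xi\mapsto u^\top\nabla_\theta f(\theta,\xi)$ is $L_{\theta\xi}^f$-Lipschitz by Lemma~\ref{lemma:f Lipschitz}. Lemma~\ref{lemma:wasserstein} together with Assumption~\ref{assumption:distribution sensitivity} then bounds this term by $\varepsilon L_{\theta\xi}^f\|\theta-\theta_s\|$. The second bracket is bounded in norm by $L_{\theta\theta}^f\|\theta-\theta_s\|$, which yields $\|h_\theta-h_{\theta_s}\|\le\beta\|\theta-\theta_s\|$.

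For the inner product, the second bracket paired with $\theta-\theta_s$ is at least $\gamma\|\theta-\theta_s\|^2$. This needs $\gamma$-strong convexity of $f(\cdot,\xi)$, which follows because $f$ is a supremum of the $\gamma$-strongly convex functions $\phi(\cdot,\xi,\zeta)$ (Assumption~\ref{assumption:lambda}). The first bracket paired with $\theta-\theta_s$ is at least $-\varepsilon L_{\theta\xi}^f\|\theta-\theta_s\|^2$ by Cauchy--Schwarz. Combining these gives $\|G(\theta)-G(\theta_s)\|^2\le(1-2\eta(\gamma-\varepsilon L_{\theta\xi}^f)+\eta^2\beta^2)\|\theta-\theta_s\|^2=\kappa_{\rm gd}^2\|\theta-\theta_s\|^2$. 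The step-size condition makes the quadratic in $\eta$ negative, so $\kappa_{\rm gd}<1$; under the condition $\gamma>\varepsilon L_{\theta\xi}^f$ the admissible interval for $\eta$ is nonempty. Uniqueness of the fixed point also follows from this contraction.

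\textbf{Error step.} With $z(\theta_t,\xi_t)\in\mathcal{Z}_\epsilon(\theta_t,\xi_t)$, inequality \eqref{eq:phi lip theta_zeta} gives $\|g_t-h_{\theta_t}\|\le L_{\theta\zeta}^\phi\epsilon$. Nonexpansiveness of the projection then yields $\|\theta_{t+1}-\theta_s\|\le\|G(\theta_t)-G(\theta_s)\|+\eta L_{\theta\zeta}^\phi\epsilon\le\kappa_{\rm gd}\|\theta_t-\theta_s\|+\eta L_{\theta\zeta}^\phi\epsilon$. Unrolling this recursion and summing the geometric series gives claim 1, and setting $\epsilon=0$ gives claim 2.

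The main obstacle is the distribution-shift term. The Kantorovich--Rubinstein bound must be applied to a vector-valued function, which is handled by testing against unit vectors $u$. It also requires that strong convexity transfer from $\phi$ to $f$ with constant $\gamma$ rather than $m$. The paper states $\gamma$-strong convexity only for $\phi$, so I would justify the transfer explicitly: each $\phi(\cdot,\xi,\zeta)-\tfrac{\gamma}{2}\|\cdot\|^2$ is convex, and a pointwise supremum of convex functions is convex.
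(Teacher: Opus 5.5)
Your proposal is correct and follows essentially the paper's proof: you show the exact projected map is a $\kappa_{\rm gd}$-contraction by expanding the squared norm and splitting the gradient difference into a $\gamma$-strong-convexity term and an $\varepsilon L_{\theta\xi}^f$ distribution-shift term, then treat the inexact maximizer as an additive $\eta L_{\theta\zeta}^\phi\epsilon$ perturbation and sum the geometric series. Your explicit justifications are the right ones for steps the paper leaves implicit. These are testing Kantorovich--Rubinstein against unit vectors, and obtaining $\gamma$-strong convexity of $f(\cdot,\xi)$ as a supremum of $\gamma$-strongly convex functions; the paper instead cites convexity of $F_\theta$, which fixes $\zeta$ and is not quite the right object. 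One small correction: $\kappa_{\rm rm}<1$ is not assumed in this theorem, so existence of $\theta_s$ should come from your contraction, which your computation gives for arbitrary pairs, rather than from Theorem~\ref{theo:risk minimization}.
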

The proof of Theorem~\ref{theo:gradient descent} is given in the Appendix.

\begin{remark}\label{remark:gradient descent}
Similar to  Theorem~\ref{theo:risk minimization}, the iterates $\theta_t$ converge faster when the objective function is smooth, strongly convex, and when the distribution map exhibits low sensitivity.  
In risk minimization, the convergence is determined by the stability of the optimal decision under distributional shifts. 
For gradient descent, maintaining a linear rate of convergence requires that the step size $\eta$ be sufficiently small relative to the regularity of the gradient.   
Additionally, Theorems~\ref{theo:risk minimization} and~\ref{theo:gradient descent} show that the limiting convergence neighborhoods are determined by the approximation error $\epsilon$ and the corresponding contraction margins  $\gamma_{\rm rm}$ and $\gamma_{\rm gd}$. Specifically, the neighborhood in Theorem~\ref{theo:risk minimization} is governed by  $\gamma_{\rm rm}$, whereas the neighborhood in Theorem~\ref{theo:gradient descent} is governed jointly by $\gamma_{\rm gd}$ and the algorithmic step size $\eta$.
\end{remark}



\section{Suboptimality guarantee}\label{sec:analysis}
The preceding section developed repeated risk minimization and gradient descent algorithms, and showed that both algorithms successfully converge to a unique robust performative stable point. Building on these results, this section evaluates the stable point by characterizing its distance to the optimal point and the corresponding excess performative risk.
To formally relate the stable and optimal points, we introduce the decoupled performative risk \cite{perdomo2020performative}: 
\begin{equation*} 
{\rm DPR}(\theta,\theta'):= \mathbb{E}_{\xi \sim \mathbb{\hat{P}(\theta)}}[f(\theta',\xi)].
\end{equation*} 
Accordingly, the performative stable point and the performative optimum can be expressed as 
\begin{align*} 
\theta_s &= {\arg\min}_{\theta\in \Theta} {\rm DPR}(\theta_s,\theta), \nonumber \\ 
\theta_o &= {\arg\min}_{\theta\in \Theta} {\rm DPR}(\theta,\theta) .
\end{align*}

We provide the following lemma to analyze the Lipschitzness of the function $f$ with respect to $\xi$. 
\begin{lemma}\label{lemma:L_xi_f Lipschitz}
Let Assumptions~\ref{assumption:loss function}--\ref{assumption:concave} hold.
Assume that the loss function $l(\theta,\zeta): \Theta \times\Xi \rightarrow \mathbb{R}$ is $L_\zeta$-Lipschitz in $\zeta$. Then, the function $f(\theta,\xi): \Theta \times \Xi \rightarrow \mathbb{R}$ is $L_\xi^f$-Lipschitz in $\xi$ with $L_{\xi}^f =   2\lambda_{\max}(L_\zeta+2\lambda_{\max}D_\xi)/\mu + 2\lambda_{\max}D_\xi$.
\end{lemma}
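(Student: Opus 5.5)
The plan is to bound $|f(\theta,\xi)-f(\theta,\xi')|$ by comparing $\phi$ at the two maximizers $\zeta^\ast(\theta,\xi)$ and $\zeta^\ast(\theta,\xi')$. Lemma~\ref{lemma:f Lipschitz}(2) controls how far these maximizers move. The Lipschitz and smoothness constants of $l$ and of the squared cost then control how much the objective changes. The target constant $2\lambda_{\max}(L_\zeta+2\lambda_{\max}D_\xi)/\mu + 2\lambda_{\max}D_\xi$ has exactly this shape: a ``maximizer displacement'' factor $2\lambda_{\max}/\mu = L_{\zeta\xi}^\phi/\mu$, multiplied by a bound on $\|\nabla_\zeta\phi\|$, plus a direct term from the $\xi$-dependence of the cost.

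\textbf{Step 1: split the difference.} Fix $\theta$ and write $\zeta^\ast=\zeta^\ast(\theta,\xi)$ and $\zeta'^\ast=\zeta^\ast(\theta,\xi')$. Since $f(\theta,\xi)=\phi(\theta,\xi,\zeta^\ast)$, I would decompose
\begin{equation*}
f(\theta,\xi)-f(\theta,\xi') = \big[\phi(\theta,\xi,\zeta^\ast)-\phi(\theta,\xi,\zeta'^\ast)\big] + \big[\phi(\theta,\xi,\zeta'^\ast)-\phi(\theta,\xi',\zeta'^\ast)\big].
\end{equation*}

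\textbf{Step 2: the second bracket.} Only the cost term changes, so this bracket equals $-\lambda(\theta)\big(\|\xi-\zeta'^\ast\|^2-\|\xi'-\zeta'^\ast\|^2\big)$. Factoring the difference of squares as $\langle \xi-\xi',\,\xi+\xi'-2\zeta'^\ast\rangle$ and using that all points lie in $\Xi$, its absolute value is at most $\lambda_{\max}\cdot 2D_\xi\|\xi-\xi'\|$. This produces the additive $2\lambda_{\max}D_\xi$ term.

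\textbf{Step 3: the first bracket.} Here $\phi(\theta,\xi,\cdot)$ is evaluated at two points of the convex set $\Xi$. On $\Xi$ we have $\|\nabla_\zeta\phi\| \le \|\nabla_\zeta l\| + 2\lambda(\theta)\|\zeta-\xi\| \le L_\zeta + 2\lambda_{\max}D_\xi$. So $\phi(\theta,\xi,\cdot)$ is $(L_\zeta+2\lambda_{\max}D_\xi)$-Lipschitz, and the first bracket is at most $(L_\zeta+2\lambda_{\max}D_\xi)\|\zeta^\ast-\zeta'^\ast\|$.

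\textbf{Step 4: combine.} By Lemma~\ref{lemma:f Lipschitz}(2) with $L_{\zeta\xi}^\phi=2\lambda_{\max}$ from Lemma~\ref{lemma:phi Lipschitzs}, we have $\|\zeta^\ast-\zeta'^\ast\|\le (2\lambda_{\max}/\mu)\|\xi-\xi'\|$. Adding the bounds from Steps 2 and 3 gives exactly $L_\xi^f$.

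The argument is symmetric in $(\xi,\xi')$, so the bound on the absolute value follows. One could sharpen the first bracket to $\le 0$ using optimality of $\zeta'^\ast$ at $\xi'$, but matching the stated constant only needs the crude bound.

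The main obstacle is justifying the maximizer-displacement bound when $\zeta^\ast$ may lie on the boundary of $\Xi$. I would rely on Lemma~\ref{lemma:f Lipschitz}(2) as already established, whose proof presumably combines strong concavity (Assumption~\ref{assumption:concave}) with the variational inequality characterizing constrained maximizers.
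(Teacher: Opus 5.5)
Your proof is correct and is essentially the paper's argument: the same intermediate point $\phi(\theta,\xi,\zeta^\ast(\theta,\xi'))$, the same $2\lambda_{\max}D_\xi$ difference-of-squares bound, and the same appeal to Lemma~\ref{lemma:f Lipschitz}(2) with $L_{\zeta\xi}^\phi=2\lambda_{\max}$. Bounding the first bracket through $\|\nabla_\zeta\phi\|$ on the convex set $\Xi$, instead of splitting it into the $l$- and $c$-differences as the paper does, is only cosmetic.

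Your closing aside has the sign backwards. The first bracket is $\ge 0$, because $\zeta^\ast(\theta,\xi)$ maximizes $\phi(\theta,\xi,\cdot)$; optimality at $\xi'$ says nothing about $\phi(\theta,\xi,\cdot)$. This gives $f(\theta,\xi)-f(\theta,\xi')\ge -2\lambda_{\max}D_\xi\|\xi-\xi'\|$, and swapping $\xi,\xi'$ then shows that $f$ is in fact $2\lambda_{\max}D_\xi$-Lipschitz in $\xi$, without strong concavity or $L_\zeta$.
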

\noindent The proof of Lemma~\ref{lemma:L_xi_f Lipschitz} is given in the Appendix.

The following theorem analyzes the suboptimality gap between the theoretical performative stable point $\theta_s$ and the performative optimum $\theta_o$.  
\begin{theorem}\label{theorem:suboptimality}
Let Assumptions~\ref{assumption:distribution sensitivity}--\ref{assumption:concave} hold. Assume that the loss function $l(\theta,\zeta)$ is $L_\zeta$-Lipschitz in $\zeta$.  Then, the performative stable point $\theta_s$ and the performative optimum $\theta_o$ satisfy
\begin{align}
\|\theta_s-\theta_o\| &\le 2\varepsilon L_\xi^f/\gamma, 
\label{eq:suboptimality gap} \\ 
{\rm DPR}(\theta_s,\theta_s)-{\rm DPR}(\theta_o,\theta_o) &\le  2(\varepsilon L_\xi^f)^2/\gamma. \nonumber
\end{align}
\end{theorem}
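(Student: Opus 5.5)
The plan follows the argument of \cite{perdomo2020performative}, adapted to the robust surrogate $f$. The key ingredients are the $\gamma$-strong convexity of $f(\cdot,\xi)$, Lemma~\ref{lemma:L_xi_f Lipschitz}, and the Kantorovich--Rubinstein duality of Lemma~\ref{lemma:wasserstein}.

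First, I will show that $f(\theta,\xi)=\sup_{\zeta}\phi(\theta,\xi,\zeta)$ is $\gamma$-strongly convex in $\theta$. For each fixed $\zeta$, Assumption~\ref{assumption:lambda} gives that $\phi(\cdot,\xi,\zeta)$ is $\gamma$-strongly convex. A pointwise supremum of $\gamma$-strongly convex functions is again $\gamma$-strongly convex, since $\phi-\frac{\gamma}{2}\|\theta\|^2$ is convex for every $\zeta$ and the supremum preserves convexity. Hence ${\rm DPR}(\theta',\cdot)$ is $\gamma$-strongly convex for every fixed $\theta'$.

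Next, I will quantify how the decoupled risk moves when its first argument changes. By Lemma~\ref{lemma:L_xi_f Lipschitz}, $f(\theta,\cdot)$ is $L_\xi^f$-Lipschitz. Lemma~\ref{lemma:wasserstein} with $K=L_\xi^f$ and Assumption~\ref{assumption:distribution sensitivity} then give, for all $\theta,\theta',\theta''$,
\[
|{\rm DPR}(\theta',\theta)-{\rm DPR}(\theta'',\theta)|\le L_\xi^f\,\varepsilon\|\theta'-\theta''\|.
\]

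\textbf{Distance bound.} Since $\theta_s$ minimizes the $\gamma$-strongly convex function ${\rm DPR}(\theta_s,\cdot)$ over $\Theta$, the first-order optimality condition on the convex set $\Theta$ yields
\[
{\rm DPR}(\theta_s,\theta_o)-{\rm DPR}(\theta_s,\theta_s)\ge \tfrac{\gamma}{2}\|\theta_o-\theta_s\|^2 .
\]
Optimality of $\theta_o$ gives ${\rm DPR}(\theta_o,\theta_o)\le {\rm DPR}(\theta_s,\theta_s)$. Combining the two inequalities,
\[
\tfrac{\gamma}{2}\|\theta_o-\theta_s\|^2\le {\rm DPR}(\theta_s,\theta_o)-{\rm DPR}(\theta_o,\theta_o)\le \varepsilon L_\xi^f\|\theta_o-\theta_s\|,
\]
where the last step uses the shift bound above. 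Dividing by $\|\theta_o-\theta_s\|$ gives $\|\theta_s-\theta_o\|\le 2\varepsilon L_\xi^f/\gamma$.

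\textbf{Risk gap.} I decompose
\[
{\rm DPR}(\theta_s,\theta_s)-{\rm DPR}(\theta_o,\theta_o)=\big[{\rm DPR}(\theta_s,\theta_s)-{\rm DPR}(\theta_s,\theta_o)\big]+\big[{\rm DPR}(\theta_s,\theta_o)-{\rm DPR}(\theta_o,\theta_o)\big].
\]
The first bracket is at most $-\frac{\gamma}{2}\|\theta_s-\theta_o\|^2$, again by optimality of $\theta_s$ and strong convexity. The second bracket is at most $\varepsilon L_\xi^f\|\theta_s-\theta_o\|$. With $x=\|\theta_s-\theta_o\|$, the gap is therefore bounded by $\varepsilon L_\xi^f x-\frac{\gamma}{2}x^2$. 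This quadratic is maximized at $x=\varepsilon L_\xi^f/\gamma$, giving
\[
{\rm DPR}(\theta_s,\theta_s)-{\rm DPR}(\theta_o,\theta_o)\le \frac{(\varepsilon L_\xi^f)^2}{2\gamma}\le \frac{2(\varepsilon L_\xi^f)^2}{\gamma}.
\]
The weaker constant in the statement also follows directly by substituting the distance bound into $\varepsilon L_\xi^f x$.

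The main point needing care is the strong convexity of $f$ in $\theta$, which underpins both bounds. Because $\Xi$ is compact, the supremum argument above is clean. The only other subtlety is using the first-order optimality condition for constrained minimization, and this holds because $\Theta$ is convex.
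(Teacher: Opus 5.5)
Your proposal is correct and follows the paper's argument. The distance bound uses exactly the paper's chain: quadratic growth of the $\gamma$-strongly convex ${\rm DPR}(\theta_s,\cdot)$, optimality of $\theta_o$, and the $\varepsilon L_\xi^f$ distribution-shift bound. Your explicit sup-of-strongly-convex argument and Kantorovich--Rubinstein step fill in details the paper only asserts. The one difference is in the risk gap: the paper only uses ${\rm DPR}(\theta_s,\theta_s)-{\rm DPR}(\theta_s,\theta_o)\le 0$ and then substitutes the distance bound, whereas you keep the $-\frac{\gamma}{2}\|\theta_s-\theta_o\|^2$ term and maximize the resulting quadratic. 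That yields the sharper bound $(\varepsilon L_\xi^f)^2/(2\gamma)$, a factor of four better than the stated constant.
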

\noindent\textit{Proof}. 
Since the function ${\rm DPR}(\theta_s,\cdot)$ is $\gamma$-strongly convex, we have that 
\begin{align*} 
\frac{\gamma}{2}\|\theta_o-\theta_s\|^2 \le &  {\rm DPR}(\theta_s,\theta_o) - {\rm DPR}(\theta_s,\theta_s) \nonumber \\ 
\le & {\rm DPR}(\theta_s,\theta_o) - {\rm DPR}(\theta_o,\theta_o) \nonumber \\ 
 \le & \varepsilon L_{\xi}^f \|\theta_s-\theta_o\|,
\end{align*}
where the second inequality follows from ${\rm DPR}(\theta_s,\theta_s) \ge {\rm DPR}(\theta_o,\theta_o)$. The last inequality follows from the $\varepsilon$-sensitivity of the distribution map $\hat{\mathbb{P}}(\cdot)$ and the function $f(\theta,\xi)=\phi(\theta,\xi,\zeta^\ast(\theta,\xi))$ is $L_{\xi}^f$-Lipschitz in $\xi$, as in Lemma~\ref{lemma:L_xi_f Lipschitz}. 
Thus, we obtain \eqref{eq:suboptimality gap}. 

Furthermore, the suboptimality bound of the decision $\theta_s$ is given as 
\begin{align}\label{eq:function subopt bound} 
& {\rm DPR}(\theta_s,\theta_s) - {\rm DPR}(\theta_o,\theta_o)  \le  {\rm DPR}(\theta_s,\theta_o) - {\rm DPR}(\theta_o,\theta_o) \nonumber\\
& \le  \varepsilon L_\xi^f\|\theta_s - \theta_o\| \le \frac{2(\varepsilon L_\xi^f)^2}{\gamma}, 
\end{align}
where the first inequality follows from that $\theta_s$ minimizes ${\rm DPR}(\theta_s,\cdot)$. The second inequality follows from the $\varepsilon$-sensitivity of the distribution map $\hat{\mathbb{P}}(\cdot)$ and the function $f(\theta,\xi)$ is $L_{\xi}^f$-Lipschitz in $\xi$. The last inequality follows from substituting \eqref{eq:suboptimality gap} into \eqref{eq:function subopt bound}. 
\hfill \qed 

\begin{remark}
Theorem~\ref{theorem:suboptimality} analyzes the equilibrium properties through $\|\theta_o - \theta_s\|$, which is an irreducible approximation gap inherent to decision-dependent problems.
It arises from the myopia of iterative methods that optimize based on current observations without anticipating future distributional shifts. This gap is an intrinsic property that is independent of the chosen algorithm. 
\end{remark}

\section{Simulations}\label{sec:simulation}
This section demonstrates the performance of the proposed distributionally robust framework through a strategic classification task. In dynamic credit scoring environments, a financial institution assesses borrowers' creditworthiness before making loan decisions   \cite{perdomo2020performative}. Conversely, this assessment mechanism incentivizes agents to strategically adapt their features to elicit a favorable credit decision.
We adopt the  ``Give Me Some Credit" Kaggle dataset \cite{GiveMeSomeCredit}
as the base distribution $\mathcal{D}$. 
This dataset consists of feature vectors $x \in \mathbb{R}^{m}$, which represent applicants' characteristics, such as monthly income, age, and the number of existing loans. The outcome variable $y \in \{0,1\}$ indicates default status, where $y=1$ denotes that the individual defaulted on the loan, and $y=0$ otherwise.
Let $\xi = [x^\top, y]^\top$ denote a data point. 

Applicants' feature set comprises both strategic features  $S \subseteq \{1, \dots, m\}$, which they can manipulate (e.g., utilization of credit lines, number of open credit lines, and number of real estate loans), and non-strategic features (e.g., age).
In our setting, $|S|=3$. In response to the classifier $\theta_S$, applicants adjust their strategic features according to
$$x_{S}' = x_S - \varepsilon\theta_S,$$
where $x_S, \theta_S \in \mathbb{R}^{|S|}$, and the parameter $\varepsilon$ regulates the sensitivity of the feature distribution. This reaction generates a new sample distribution $\mathcal{D}(\theta_{S})$.

Denote the sample size as $n$.
Denote the sample as $\xi_i = [x_i^\top, y_i]^\top$, for $i = \{1,\dots,n\}$, which are drawn from the current distribution $\mathcal{D}(\theta_S)$.
At each iteration, the institution trains the classifier model by minimizing the regularized empirical risk \cite{perdomo2020performative}:
\begin{equation*}
\mathcal{L}(\theta,\xi_i) = \frac{1}{n}\sum_{i=1}^n \left[ l(\theta,\xi_i) \right] + \frac{\gamma_{\rm reg}}{2}\|\theta\|^2,
\end{equation*}
where $l(\theta, \xi_i) = -y_i\theta^\top x_i + \log(1+\exp(\theta^\top x_i))$ is a logistic regression model used to predict default probability. Moreover,  $\gamma_{\rm reg} > 0$ is a regularization coefficient. 
The smoothness of $\mathcal{L}(\cdot,\cdot)$ is verified in \cite{perdomo2020performative}. 

To account for the discrepancy between the empirical distribution and the true population, we use a distributionally robust framework. Here, the learner minimizes a robust surrogate:
\begin{equation*}
\min_{\theta\in \Theta} \frac{1}{n} \sum_{i=1}^n \sup_{\zeta_i\in\Xi  } \left( \mathcal{L}(\theta,\zeta_i) - \lambda(\theta)c(\xi_i,\zeta_i) \right),
\end{equation*}
where $c(\cdot,\cdot)$ is the transport cost, as defined in \eqref{eq:trans cost}. The parameter  $\lambda(\theta) = 30 + 0.1\|\theta\|^2$ regulates the radius of the ambiguity set, and the set $\Xi$ represents the data domain. 

\begin{figure}
    \centering
    \includegraphics[width=0.96\linewidth]{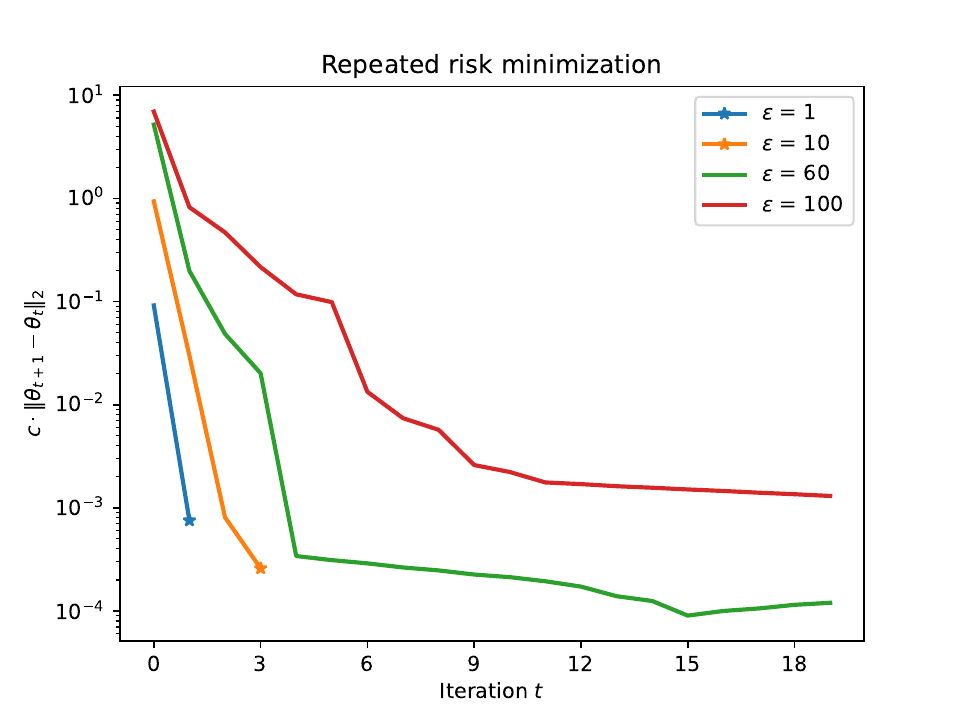}
    \caption{Convergence of the distributionally robust performative risk minimization method (Algorithm~\ref{alg:risk minimization}) for varying $\varepsilon$-sensitivity parameters. We add a marker if the distance between iterates is numerically zero at the next iteration.} 
    \label{fig:theta gap rm}
\end{figure}
\begin{figure}
    \centering
    \includegraphics[width=0.96\linewidth]{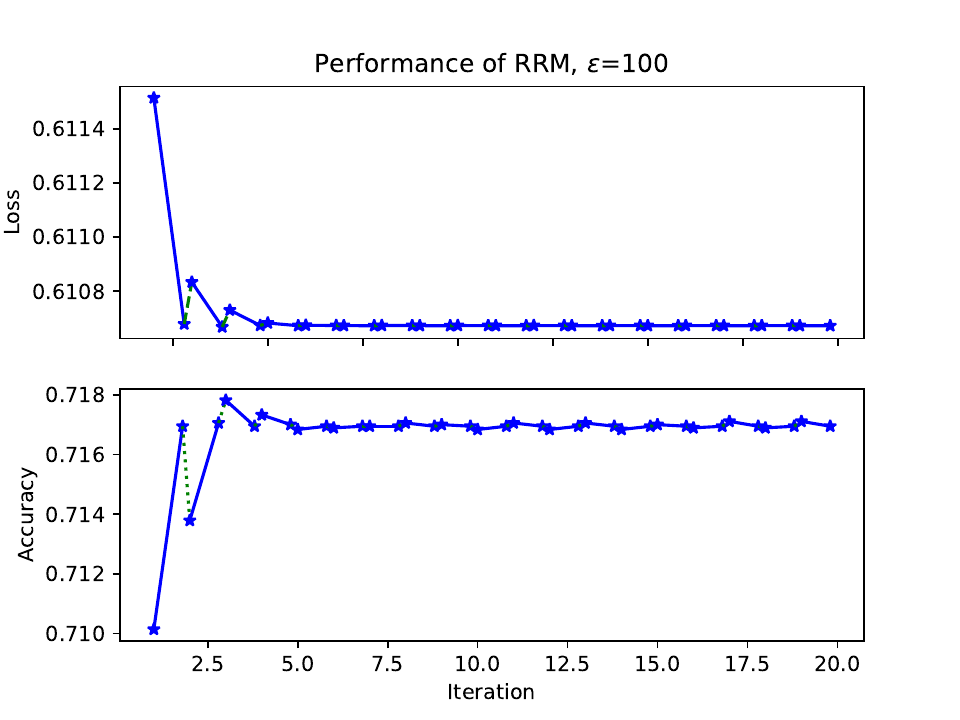}
    \caption{Performance evolution of Algorithm~\ref{alg:risk minimization} under strategic sensitivity $\varepsilon = 100$. Solid blue lines indicate the optimization phase, and dotted green lines indicate the distribution shift after classifier deployment.}
    \label{fig:acc and loss eps=60}
\end{figure}
\begin{figure}
    \centering
    \includegraphics[width=0.96\linewidth]{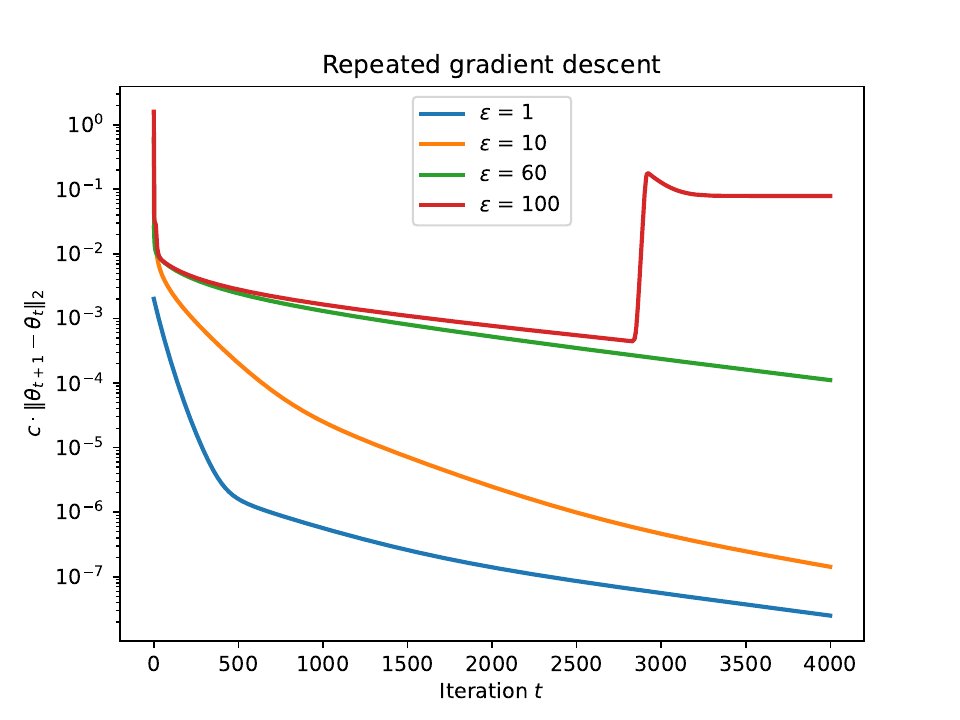}
    \caption{Convergence of the distributionally robust performative gradient descent method (Algorithm~\ref{alg:gradient descent}) for varying $\varepsilon$-sensitivity parameters.} 
    \label{fig:theta gap gd}
\end{figure}
We first examine the convergence rate of the distributionally robust risk minimization scheme (Algorithm~\ref{alg:risk minimization}) across various sensitivity levels $\varepsilon \in \{1, 10, 60, 100\}$. The error tolerance for the internal maximizer is set to $\epsilon = 10^{-9}$. In Fig.~\ref{fig:theta gap rm}, we normalize the parameter distance using the scaling factor $c = \|\theta_{0,S}\|_2^{-1}$ with $\theta_{0,S}$ being the initial value of $\theta_{S}$. 
Fig.~\ref{fig:theta gap rm} shows that  Algorithm~\ref{alg:risk minimization} achieves convergence within $20$ iterations. Notably, for lower sensitivity levels ($\varepsilon \in \{1, 10\}$), the trajectories terminate abruptly at iterations 2 and 3, respectively, indicating that the algorithm reached exact convergence. Moreover, larger $\varepsilon$ values induce more pronounced oscillations.
Furthermore, Fig.~\ref{fig:acc and loss eps=60} shows the performance of Algorithm~\ref{alg:risk minimization} under strategic sensitivity $\varepsilon = 100$. It depicts the evolution of the empirical loss and classification accuracy over 20 iterations. Despite the distributional shifts induced by strategic feature manipulation, the process stabilizes within $10$ iterations. 

Fig.~\ref{fig:theta gap gd} illustrates the convergence behavior of the distributionally robust gradient descent method (Algorithm~\ref{alg:gradient descent}) under identical experimental conditions. Compared to the risk-minimization approach, gradient descent converges more slowly. For all experiments, we use a constant step size of $\eta = 10^{-2}$. Notably, the convergence speed degrades as the strategic sensitivity $\varepsilon$ increases. When $\varepsilon = 100$, the parameter gap $\|\theta_{t+1} - \theta_t\|_2$ jumps to $0.1$ during the iterations, which means that the iterates $\theta_t$ begin to oscillate within a specific region. This behavior empirically validates Remark~\ref{remark:gradient descent}, which states that the stability of gradient descent depends on the alignment between the step size and the gradient's regularity.

To demonstrate the robustness of the Wasserstein DRO (W-DRO) framework, Fig.~\ref{fig:compare_acc_algs} compares the pre- and post-deployment classification accuracies of Algorithm~\ref{alg:risk minimization} against those of empirical risk minimization (ERM) and DRO based on KL-divergence (KL-DRO) \cite{xue2024distributionally}, across varying sensitivity levels ($\varepsilon \in [0, 80]$). 
We partition the data set into training and test sets. 
The classifiers are trained on a perturbed version of the training data and subsequently evaluated on the test set.
``pre'' and ``post'' denote before and after the data distribution adapts to the
model deployments, respectively. 
In the pre-deployment phase, the accuracy decreases as the sensitivity parameter $\varepsilon$ increases. This degradation occurs because the robust classifiers trade off baseline accuracy to establish a safety margin against the anticipated distribution shifts.  
In the post-deployment phase, ERM outperforms DRO when $\varepsilon = 0$. However, for $\varepsilon > 0$, both W-DRO and KL-DRO consistently outperform ERM, as their decisions $\theta$ are more resilient to different data sets. Furthermore, as $\varepsilon$ continues to increase, the post-deployment accuracy of W-DRO improves. This arises because the robust classifier reshapes the data distribution and forces strategic user behavior into a more linearly separable space, thereby making the shifted data easier to classify at equilibrium.

In Fig.~\ref{fig:compare radius}, we further investigate the sensitivity of the proposed W-DRO framework to the regularization parameter $\lambda(\theta) = \lambda_c + 10^{-7} \cdot \|\theta\|^2$. In this paper, a larger $\lambda_c$ corresponds to a smaller ambiguity set, which drives the W-DRO solution to converge toward the ERM baseline. 
The results demonstrate that W-DRO  consistently outperforms ERM across all tested regularization levels. Furthermore, as $\lambda_c$ increases, we observe a marginal degradation in the accuracy. This behavior confirms that the distributional robustness framework effectively counters strategic masking.

\begin{figure}
    \centering
    \includegraphics[width=0.96\linewidth]{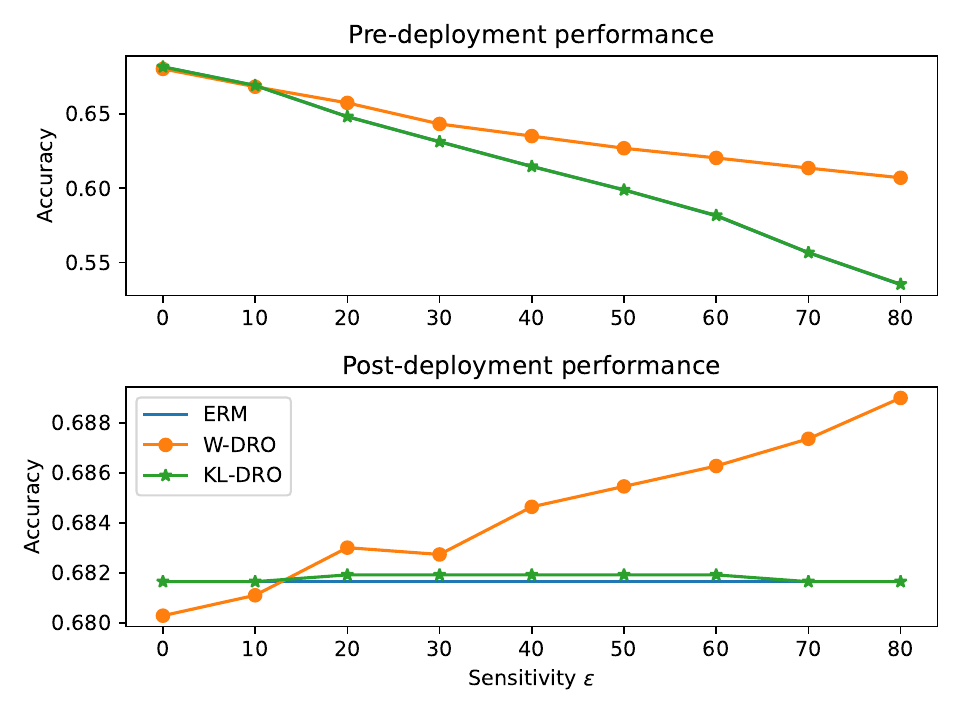}
    \caption{Pre-deployment (top) and post-deployment (bottom) accuracies of empirical risk minimization (ERM), Wasserstein DRO (W-DRO, Algorithm~\ref{alg:risk minimization}), and KL-DRO, across varying sensitivity levels $\varepsilon$. ``Pre'' and ``Post'' denote before and after the distribution adapts to the model deployments.  }
    \label{fig:compare_acc_algs}
\end{figure}

\begin{figure}
    \centering
    \includegraphics[width=\linewidth]{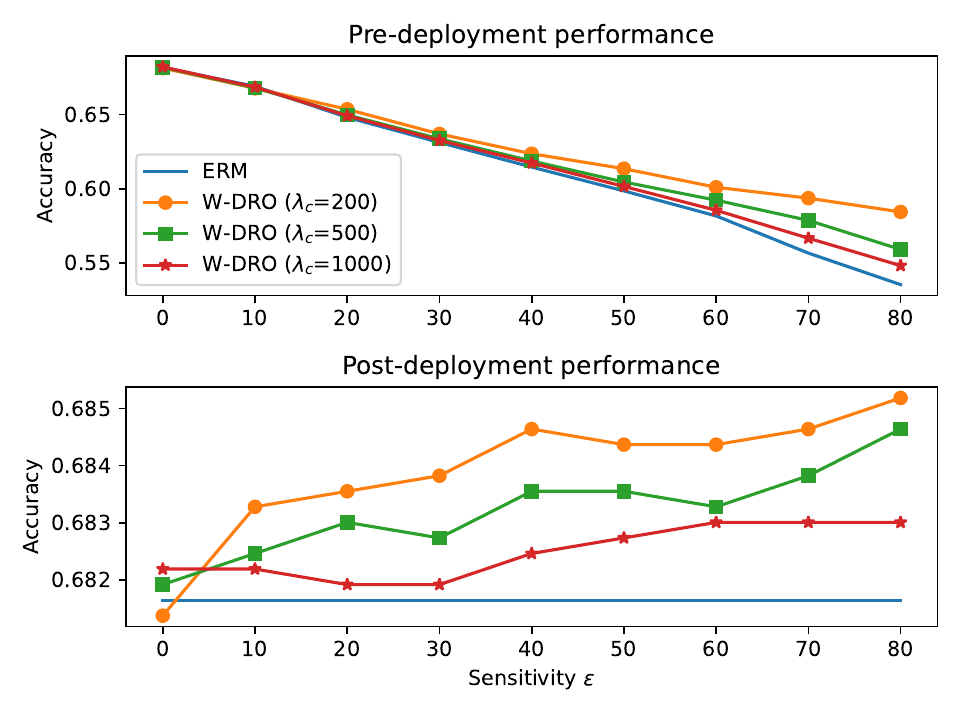}
    \caption{Pre-deployment (top) and post-deployment (bottom) accuracies under varying sensitivity $\varepsilon$. The parameter $\lambda_c$ dictates the transportation cost penalty in the W-DRO formulation, where a smaller $\lambda_c$ equates to a larger robust radius, and vice versa. 
}
    \label{fig:compare radius}
\end{figure}


\section{Conclusion}\label{sec:conclusion}
In this paper, we introduced a distributionally robust performative prediction framework grounded in empirical data distributions. To address the inherent discrepancy between the empirical and true distributions, we employed a decision-dependent Wasserstein ambiguity set centered on the empirical distribution to encompass the true distribution. By leveraging Lagrangian relaxation and strong duality, we transformed the robust objective into a computationally tractable form.
Within this framework, we defined robust performative stable points and the robust performative optimum. We developed DR-RRM and DR-RGD algorithms to iteratively find the stable points. We proved that under standard regularity conditions, both algorithms converge to a stable point in the exact setting, or to a bounded neighborhood in the presence of approximation errors. Furthermore, we established theoretical bounds on the parameter distance and risk gap between the robust stable points and the global performative optimum. These results show that the distributionally robust performative prediction framework effectively mitigates the impact of strategic manipulation while maintaining robust performance under distributional uncertainty.
Future work will explore extending this framework to multi-agent game-theoretic settings.

\section*{Appendix}\label{sec:appendix}
\noindent\textit{Proof of Lemma~\ref{lemma:phi Lipschitzs}.}
In the following, we will show that the gradient $\nabla_\theta\phi(\theta, \xi, \zeta)$ is Lipschitz continuous with respect to the arguments $\theta$, $\xi$, and $\zeta$, with Lipschitz constants $L_{\theta\theta}^\phi$, $L_{\theta\xi}^\phi$, and $L_{\theta\zeta}^\phi$, respectively. 

To begin with, for $\theta,\theta' \in \Theta$, we have that 
\begin{align*} 
&\|\nabla_\theta\phi(\theta,\xi,\zeta)-\nabla_\theta\phi(\theta',\xi,\zeta)\| \nonumber \\ 
=& \| \nabla_\theta l(\theta,\zeta)-\nabla_\theta l(\theta',\zeta)-\big(\nabla_\theta\lambda(\theta)-\nabla_\theta\lambda(\theta') \big) c(\xi,\zeta)\| \nonumber \\ 
\le &  \| \nabla_\theta l(\theta,\zeta)-\nabla_\theta l(\theta',\zeta)\| + \| \nabla_\theta\lambda(\theta)-\nabla_\theta\lambda(\theta')  \|\| c(\xi,\zeta)\|  \nonumber \\
\le& L_{\theta\theta}^\phi \|\theta-\theta'\|,
\end{align*}
where $ L_{\theta\theta}^\phi = L_{\theta\theta}+H_\lambda D_\xi^2$. The last inequality follows from the Lipschitzness of the functions $\nabla_\theta l(\theta,\zeta)$ and $\nabla_\theta\lambda(\theta)$, as in Assumptions~\ref{assumption:loss function} and \ref{assumption:lambda}, along with $\|c(\xi,\zeta)\| = \|\xi-\zeta\|^2\le D_\xi^2$, with $D_\xi$ being the diameter of the set $\Xi$.

Similarly, for $\xi,\xi' \in \Xi$, we have that 
\begin{align}\label{eq:L_phi_theta_xi}
&\|\nabla_\theta\phi(\theta,\xi,\zeta)-\nabla_\theta\phi(\theta,\xi',\zeta)\| \nonumber  \\ 
&= \| \nabla_\theta l(\theta,\zeta)-\nabla_\theta l(\theta,\zeta)-\nabla_\theta\lambda(\theta)(  c(\xi,\zeta)- c(\xi',\zeta)\big) \| \nonumber  \\ 
&\le \|\nabla_\theta\lambda(\theta)\| \| c(\xi,\zeta)- c(\xi',\zeta)\|
\nonumber \\ 
&\le
L_{\theta\xi}^\phi \|\xi-\xi'\| ,
\end{align}
where $ L_{\theta\xi}^\phi = 2L_\lambda D_\xi$. The last inequality holds by 
\begin{align}\label{eq:c difference}
&\|c(\xi,\zeta)-c(\xi',\zeta)\| \nonumber\\ 
&= \big\| \|\xi-\zeta\|^2-\|\xi'-\zeta\|^2    \big\| \nonumber \\
& = \big\|(\xi-\zeta)^\top(\xi-\zeta)-(\xi'-\zeta)^\top(\xi'-\zeta) \big\| \nonumber \\ 
& =  \big\|\xi^\top \xi - 2\xi^\top \zeta    - {\xi'}^\top \xi'
 + 2{\xi'}^\top \zeta  \big\|   \nonumber \\ 
& =  \big\|(\xi -\xi')^\top (\xi+\xi'-2\zeta) \big\|  \le 2D_\xi \| \xi-\xi'\|, 
\end{align} 
where the inequality holds by the fact that $D_\xi$ is the diameter of the set $\Xi$.

For $\zeta,\zeta' \in \Xi$, we have that 
\begin{align*}
&\|\nabla_\theta\phi(\theta,\xi,\zeta)-\nabla_\theta\phi(\theta,\xi,\zeta')\| \nonumber \\ 
&= \big\| \nabla_\theta l(\theta,\zeta)-\nabla_\theta l(\theta,\zeta')-\nabla_\theta\lambda(\theta)\big( c(\xi,\zeta)-c(\xi,\zeta') \big)\big\| \nonumber \\ 
&\le  \| \nabla_\theta l(\theta,\zeta)-\nabla_\theta l(\theta,\zeta')\| + \|\nabla_\theta\lambda(\theta)\| \| c(\xi,\zeta)-c(\xi,\zeta')  \|   \nonumber \\
&\le L_{\theta\zeta}\|\zeta-\zeta'\| + 2L_{\lambda}D_\xi \|\zeta-\zeta'\|
=
L_{\theta\zeta}^\phi \|\zeta-\zeta'\| ,
\end{align*}
where $L_{\theta\zeta}^\phi = L_{\theta\zeta}+2L_{\lambda}D_\xi$. 
The second inequality holds by $ \| c(\xi,\zeta)-c(\xi,\zeta')  \| \le 2D_\xi \|\zeta-\zeta'\| $, which follows the derivation of \eqref{eq:L_phi_theta_xi}. 

We next show that gradient mapping $\nabla_\zeta\phi(\theta, \xi, \zeta)$ is Lipschitz continuous with respect to $\theta$, with Lipschitz constant $L_{\zeta\theta}^\phi$. 
For $\theta,\theta' \in \Theta$, we have that 
\begin{align*} 
&\|\nabla_\zeta\phi(\theta,\xi,\zeta)-\nabla_\zeta\phi(\theta',\xi,\zeta)\| \nonumber \\ 
&= \| \nabla_\zeta l(\theta,\zeta)-\nabla_\zeta l(\theta',\zeta)-2\big(\lambda(\theta)-\lambda(\theta') \big)(\zeta-\xi)\| \nonumber \\ 
&\le  \| \nabla_\zeta l(\theta,\zeta)-\nabla_\zeta l(\theta',\zeta)\|+ 2\| \lambda(\theta)-\lambda(\theta') \|\|\zeta-\xi\| \nonumber \\
&\le L_{\zeta\theta}^\phi \|\theta-\theta'\|, 
\end{align*}
where $L_{\zeta\theta}^\phi = L_{\zeta\theta} + 2L_\lambda D_\xi $. The equality follows from $\nabla_\zeta c(\xi,\zeta)=\nabla_\zeta \|\xi-\zeta\|^2 = 2(\zeta-\xi) $.
The second inequality follows from the Lipschitzness of 
$\nabla_\zeta l(\cdot,\zeta)$ and $\lambda(\cdot)$, as in 
Assumptions~\ref{assumption:loss function} and \ref{assumption:lambda}. 

Finally, we show that the gradient mapping $\nabla_\zeta\phi(\theta, \xi, \zeta)$ is Lipschitz continuous with respect to $\xi$, with Lipschitz constant $L_{\zeta\xi}^\phi$. 
For $\xi,\xi' \in \Xi$, we have that 
\begin{align*} 
&\|\nabla_\zeta\phi(\theta,\xi,\zeta)-\nabla_\zeta\phi(\theta,\xi',\zeta)\| \nonumber \\ 
& = \| \nabla_\zeta l(\theta,\zeta)-\nabla_\zeta l(\theta,\zeta)-2\lambda(\theta) (\zeta-\xi-\zeta+\xi')\| \nonumber \\ 
&\le   2\| \lambda(\theta)\|  \|\xi-\xi'\|  
\le L_{\zeta\xi}^\phi \|\xi-\xi'\| ,
\end{align*}
where $L_{\zeta\xi}^\phi = 2\lambda_{\max}$. The  equality follows from $\nabla_\zeta c(\xi,\zeta)=2(\zeta-\xi)$. The last inequality follows from $\lambda(\theta)\le \lambda_{\max}$, as stated in Assumption~\ref{assumption:lambda}.
\hfill \qed 

\noindent \textit{Proof of Lemma~\ref{lemma:f Lipschitz}}. 
Since $\phi(\theta,\xi,\zeta)$ is $\mu$-strongly concave in $\zeta$, for $\theta_1,\theta_2 \in \Theta$ and all $\xi\in\Xi$, we have that 
\begin{align}\label{eq:strongly concave inequality1}
&\frac{\mu}{2}\|\zeta^\ast(\theta_1,\xi) - \zeta^\ast(\theta_2,\xi)\|^2 \nonumber \\ 
&\le \phi(\theta_2,\xi,\zeta^\ast(\theta_2,\xi)) - \phi(\theta_2,\xi,\zeta^\ast(\theta_1,\xi)),
\end{align} and 
\begin{align}\label{eq:strongly concave inequality2}
&\phi(\theta_2,\xi,\zeta^\ast(\theta_2,\xi))  \le \phi(\theta_2,\xi,\zeta^\ast(\theta_1,\xi))\nonumber\\
&\hspace{1em} + \nabla_\zeta\phi(\theta_2,\xi,\zeta^\ast(\theta_1,\xi))^\top(\zeta^\ast(\theta_2,\xi)
-\zeta^\ast(\theta_1,\xi)) \nonumber \\
& \hspace{1em}- \frac{\mu}{2}\|\zeta^\ast(\theta_1,\xi)-\zeta^\ast(\theta_2,\xi)\|^2,    
\end{align}
where the first inequality follows from the optimality of $\zeta^\ast(\theta_2,\xi)$ and  $\nabla_\zeta \phi(\theta_2,\xi,\zeta^\ast(\theta_2,\xi))^\top (\zeta^\ast(\theta_1,\xi) - \zeta^\ast(\theta_2,\xi)) \le 0$.
Combining inequalities \eqref{eq:strongly concave inequality1} and \eqref{eq:strongly concave inequality2}, we obtain 
\begin{align}\label{eq:xi^2 le theta xi}
&\mu\|\zeta^\ast(\theta_1,\xi)-\zeta^\ast(\theta_2,\xi)\|^2 \nonumber \\
&\le \nabla_\zeta\phi(\theta_2,\xi,\zeta^\ast(\theta_1,\xi))^\top(\zeta^\ast(\theta_2,\xi)-\zeta^\ast(\theta_1,\xi) ) \nonumber \\
& \le \big(\nabla_\zeta\phi(\theta_2,\xi,\zeta^\ast(\theta_1,\xi))-\nabla_\zeta\phi(\theta_1,\xi,\zeta^\ast(\theta_1,\xi))\big)^\top\nonumber \\
&\hspace{1em}(\zeta^\ast(\theta_2,\xi)-\zeta^\ast(\theta_1,\xi) ) \nonumber \\ 
& \le \| \nabla_\zeta\phi(\theta_2,\xi,\zeta^\ast(\theta_1,\xi))-\nabla_\zeta\phi(\theta_1,\xi,\zeta^\ast(\theta_1,\xi))\|\nonumber \\
&\hspace{1em}\|\zeta^\ast(\theta_1,\xi) - \zeta^\ast(\theta_2,\xi)\| \nonumber \\
& \le L_{\zeta\theta}^\phi \|\theta_1-\theta_2\|\|\zeta^\ast(\theta_1,\xi) - \zeta^\ast(\theta_2,\xi)\| ,
\end{align}
where the second inequality holds by $\nabla_\zeta \phi(\theta_1,\xi,\zeta^\ast(\theta_1,\xi))^\top (\zeta^\ast(\theta_2,\xi) - \zeta^\ast(\theta_1,\xi)) \le 0$. The last inequality follows from the Lipschitzness of $\nabla_\zeta\phi$ in $\theta$, as in Lemma~\ref{lemma:phi Lipschitzs}.  
By \eqref{eq:xi^2 le theta xi}, we obtain  
\begin{equation}\label{eq:zeta(theta) Lipschitz}
\|\zeta^\ast(\theta_1,\xi)-\zeta^\ast(\theta_2,\xi)\| \le \frac{L_{\zeta\theta}^\phi}{\mu}\|\theta_1-\theta_2\|.   
\end{equation}

As stated in Assumptions~\ref{assumption:loss function} and \ref{assumption:lambda}, the function $\phi(\theta,\xi,\zeta)$ is differentiable and strongly concave in $\zeta$. 
Then, by Danskin's Theorem, we have that $\nabla_\theta f(\theta,\xi) =  \nabla_\theta \phi(\theta,\xi,\zeta^\ast(\theta,\xi))   $, where $\zeta^\ast(\theta,\xi) \in {\arg\max}_{\zeta}\phi(\theta,\xi,\zeta)$ for  $(\theta,\xi)\in \Theta \times \Xi$.   
Additionally, by the strongly concavity of $\phi(\theta,\xi,\cdot)$, $\zeta^\ast(\theta,\xi)$ is unique for each $(\theta,\xi)$. 
Using these results, we next bound $\|\nabla_\theta f(\theta,\xi)-\nabla_\theta f(\theta',\xi)\|$. For all $\theta_1,\theta_2 \in \Theta$ and $\xi \in \Xi$, we have that  
\begin{align}\label{eq:Lipschitz_f_theta_theta}
&\|\nabla_\theta f(\theta_1,\xi) - \nabla_\theta f(\theta_2,\xi)\| \nonumber \\ 
&= \| \nabla_\theta\phi(\theta_1,\xi, \zeta^\ast(\theta_1,\xi))-\nabla_\theta\phi(\theta_2,\xi, \zeta^\ast(\theta_2,\xi)) \nonumber \|\\ 
&\le  \| \nabla_\theta \phi(\theta_1,\xi, \zeta^\ast(\theta_1,\xi)) -  \nabla_\theta \phi(\theta_2,\xi, \zeta^\ast(\theta_1,\xi))\| \nonumber \\ 
&\hspace{1em}+\|\nabla_\theta \phi(\theta_2,\xi,\zeta^\ast(\theta_1,\xi))-\nabla_\theta \phi(\theta_2,\xi,\zeta^\ast(\theta_2,\xi))\|\nonumber \\ 
 &\le L_{\theta\theta}^\phi\|\theta_1-\theta_2\| + L_{\theta\zeta}^\phi\|\zeta^\ast(\theta_1,\xi)-\zeta^\ast(\theta_2,\xi)\| \nonumber\\
 & \le \big(L_{\theta\theta}^\phi + L_{\theta\zeta}^\phi L_{\zeta\theta}^\phi / \mu\big)\| \theta_1-\theta_2\|  =  L_{\theta\theta}^f\| \theta_1-\theta_2\|, 
\end{align}
where $L_{\theta\theta}^f=  L_{\theta\theta}^\phi + L_{\theta\zeta}^\phi L_{\zeta\theta}^\phi / \mu $. 
The second inequality follows from the Lipschitzness of $\nabla_\theta \phi(\theta,\xi,\zeta)$ with respect to $\theta$ and $\zeta$. The third inequality follows from substituting \eqref{eq:zeta(theta) Lipschitz} into \eqref{eq:Lipschitz_f_theta_theta}.

We next prove the second claim. By following the derivation of \eqref{eq:strongly concave inequality1}--\eqref{eq:xi^2 le theta xi},  and by the Lipschitzness of $\nabla_\zeta \phi$ in $\xi$, as in Lemma~\ref{lemma:phi Lipschitzs},  we have that, for $\xi_1, \xi_2 \in \Xi$, and for a fixed but arbitrary $\theta$,
\begin{align*}
&\mu\|\zeta^\ast(\theta,\xi_1)-\zeta^\ast(\theta,\xi_2)\|^2 \nonumber \\ 
&\le    
  L_{\zeta\xi}^\phi\|\xi_1-\xi_2\|\|\zeta^\ast(\theta,\xi_1) - \zeta^\ast(\theta,\xi_2)\| .
\end{align*}
Therefore, we obtain 
\begin{equation}\label{eq:zeta(xi) Lipschitz}
\|\zeta^\ast(\theta,\xi_1)-\zeta^\ast(\theta,\xi_2)\|   \le \frac{L_{\zeta\xi}^\phi}{\mu}\|\xi_1-\xi_2\| .
\end{equation}
We next bound $\|\nabla_\theta f(\theta,\xi_1)-\nabla_\theta f(\theta,\xi_2)\|$. For $\theta \in \Theta$, and $\xi_1,\xi_2\in\Xi$, we have that   
\begin{align}\label{eq:Lipschitz_f_theta_x}
&\|\nabla_\theta f(\theta,\xi_1) - \nabla_\theta f(\theta,\xi_2)\| \nonumber \\ 
& = \| \nabla_\theta\phi(\theta,\xi_1,\zeta^\ast(\theta,\xi_1))-\nabla_\theta\phi(\theta,\xi_2,\zeta^\ast(\theta,\xi_2)) \nonumber \|\\ 
& \le   \| \nabla_\theta\phi(\theta,\xi_1,\zeta^\ast(\theta,\xi_1))-\nabla_\theta\phi(\theta,\xi_1,\zeta^\ast(\theta,\xi_2))\| \nonumber\\ 
&\hspace{1em} +\|\nabla_\theta\phi(\theta,\xi_1,\zeta^\ast(\theta,\xi_2))-\nabla_\theta\phi(\theta,\xi_2,\zeta^\ast(\theta,\xi_2)) \nonumber \| \nonumber \\ 
& \le  L_{\theta\zeta}^\phi\|\zeta^\ast(\theta,\xi_1) - \zeta^\ast(\theta,\xi_2)\| +L_{\theta\xi}^\phi\|\xi_1  - \xi_2 \| \nonumber \\ 
&  \le L_{\theta\xi}^f \|\xi_1-\xi_2\|,
\end{align}
where $L_{\theta\xi}^f =L_{\theta\xi}^\phi 
+L_{\theta\zeta}^\phi L_{\zeta\xi}^\phi/\mu   $. The second inequality follows from the Lipschitzness of $\nabla_\theta\phi(\theta,\zeta,\xi)$ in $\zeta$ and $\xi$, as in \eqref{eq:phi lip theta_zeta} and \eqref{eq:phi lip theta_xi}, respectively. The last inequality follows from substituting \eqref{eq:zeta(xi) Lipschitz} into \eqref{eq:Lipschitz_f_theta_x}.
\hfill \qed

\noindent\textit{Proof of Theorem~\ref{theo:risk minimization}.} 
In order to investigate the convergence of Algorithm~\ref{alg:risk minimization}, we will first show that in the absence of error, the risk minimization is a contraction mapping, and then examine the effect of the approximation error.
For $\theta,\theta',x\in\Theta$, define auxiliary functions 
\begin{align}\label{eq:rm opt obj}
F_\theta(x)&: = \mathbb{E}_{\xi\sim\hat{\mathbb{P}}(\theta)}\big[ \phi\big(x,\xi,\zeta^\ast(\theta,\xi)\big) \big], \nonumber  \\
F_{\theta'}(x)&: = \mathbb{E}_{\xi\sim\hat{\mathbb{P}}(\theta')}\big[\phi\big(x,\xi,\zeta^\ast(\theta',\xi)\big)\big],
\end{align}
which represent the objective function of the risk minimization \eqref{eq:Alg risk minimization} in the exact setting.
These functions decouple the dependence on the distribution induced by the deployed parameter $\theta$ from the dependence on the candidate model parameter $x$.
Moreover, denote  
\begin{align}\label{eq:rm mapping}
G(\theta) = \mathop{\arg\min}_{x \in \Theta}  F_\theta(x),  \quad
G(\theta')  = \mathop{\arg\min}_{x \in \Theta}  F_{\theta'}(x),
\end{align}
which represent the risk minimization in the exact setting. 
Since the function $\phi(x,\xi,\zeta)$ is $\gamma$-strongly convex in $x$, so are 
$F_\theta(x)$ and $F_{\theta'}(x)$. Then,  we have that 
\begin{align}\label{eq:J_eta strongly convex 1}
& F_\theta(G(\theta))- F_\theta(G(\theta')) \nonumber \\
&\!\ge\! \big(G(\theta)-G(\theta') \big)^\top \nabla F_\theta(G(\theta'))\!+\!\frac{\gamma}{2}\|G(\theta)-G(\theta')\|^2  ,  
\end{align}
where $\nabla F_\theta(x)$ is the gradient of $F_\theta$ with respect to $x$. Additionally, we have that    
\begin{align}\label{eq:J_eta strongly convex 2}    
\hspace{-1em} & F_\theta(G(\theta'))- F_\theta(G(\theta))  \nonumber \\ 
\hspace{-1em}&\ge  \big(G(\theta')-G(\theta) \big)^\top\nabla F_\theta(G(\theta))+\frac{\gamma}{2}\|G(\theta')-G(\theta)\|^2 \nonumber\\ 
&\ge \frac{\gamma}{2}\|G(\theta')-G(\theta)\|^2 ,
\end{align}
 where the second inequality follows from the first-order optimality condition $\big(G(\theta')-G(\theta) \big)^\top\nabla F_\theta(G(\theta)) \ge 0$.  
Combining \eqref{eq:J_eta strongly convex 1} and \eqref{eq:J_eta strongly convex 2}, we have that 
\begin{flalign*} 
& \gamma\|G(\theta')-G(\theta)\|^2 \nonumber \\ 
& \le - \big(G(\theta)-G(\theta') \big)^\top\nabla F_\theta(G(\theta'))\nonumber\\
& \le \big(G(\theta)-G(\theta') \big)^\top\big(\nabla F_{\theta'}(G(\theta'))- \nabla F_\theta(G(\theta'))\big) \nonumber \\
& \le \big\|(G(\theta)-G(\theta') \big\| \big\|\nabla F_{\theta'}(G(\theta'))- \nabla F_\theta(G(\theta'))\big\|, 
\end{flalign*}
where the second inequality follows from the first-order optimality condition $\big(G(\theta)-G(\theta') \big)^\top\nabla F_{\theta'}(G(\theta'))\ge0$. 
Therefore, we obtain 
\begin{equation}\label{eq:G norm 3}
\gamma\|G(\theta')-G(\theta)\|  \le\|\nabla F_{\theta'}(G(\theta'))- \nabla F_\theta(G(\theta'))\big\|.   
\end{equation}
We next bound the right-hand side. 
Denote $h_\alpha (x,\xi): =\nabla_x \phi(x,\xi,\zeta^\ast(\alpha,\xi))$. Note that $\xi$ and $\zeta^\ast(\theta,\xi)$ both depend on the deployed parameter $\theta$, and are independent of the optimization variable $ x$. Under the smoothness assumptions, we may interchange differentiation and expectation to obtain
\begin{align*}
&\nabla F_\theta (x)  = \mathbb{E}_{\xi\sim\hat{\mathbb{P}}(\theta)}[\nabla_x\phi(x,\xi,\zeta^\ast(\theta,\xi))] = \mathbb{E}_{\xi\sim\hat{\mathbb{P}}(\theta)}[h_{\theta}(x,\xi)].    
\end{align*}
Similarly, we have $\nabla F_{\theta'}(G(\theta'))=\mathbb{E}_{\xi\sim\hat{\mathbb{P}}(\theta')}[h_{\theta'} (G(\theta'),\xi)]$  and  $\nabla F_{\theta}(G(\theta'))=\mathbb{E}_{\xi\sim\hat{\mathbb{P}}(\theta)}[h_{\theta} (G(\theta'),\xi)]$.  
By adding and subtracting $\mathbb{E}_{\xi\sim\hat{\mathbb{P}}(\theta)}[h_{\theta'} (G(\theta'),\xi)]$,
we have  
\begin{align}\label{eq:G norm 4}
&\|\nabla F_{\theta'}(G(\theta'))- \nabla F_\theta(G(\theta'))\big\| \nonumber \\ 
&\le \big\| \mathbb{E}_{\xi\sim\hat{\mathbb{P}}(\theta')}[h_{\theta'} (G(\theta'),\xi)] - \mathbb{E}_{\xi\sim\hat{\mathbb{P}}(\theta)}[h_{\theta'} (G(\theta'),\xi)]  \big\| \nonumber \\ 
& \hspace{1em}+  \mathbb{E}_{\xi\sim\hat{\mathbb{P}}(\theta)}\big[\|h_{\theta'} (G(\theta'),\xi) -h_{\theta} (G(\theta'),\xi)\|\big] . 
\end{align}
For the first term, using the same argument as in Lemma~\ref{lemma:f Lipschitz}, the mapping $h_{\theta'}(G(\theta'),\xi)$ is $L_{\theta\xi}^f$ in $\xi$. Hence, by Lemma~\ref{lemma:wasserstein} and $\varepsilon$-sensitivity of $\hat{\mathbb{P}}(\cdot)$, we have 
\begin{align}\label{eq: first term}
&    \big\| \mathbb{E}_{\xi\sim\hat{\mathbb{P}}(\theta')}[h_{\theta'} (G(\theta'),\xi)] - \mathbb{E}_{\xi\sim\hat{\mathbb{P}}(\theta)}[h_{\theta'} (G(\theta'),\xi)]  \big\| \nonumber \\ 
&\le L_{\theta\xi}^f W_1\big(\hat{\mathbb{P}}(\theta'),\hat{\mathbb{P}}(\theta)\big) \le L_{\theta\xi}^f \varepsilon \| \theta-\theta'\|. 
\end{align}
In the second term of \eqref{eq:G norm 4}, the adversarial
maximizer changes from $ \zeta^*(\theta,\xi)$ to
$\zeta^*(\theta',\xi)$. By the Lipschitz continuity of $\nabla_x \phi$ with respect to $\zeta$, together with the sensitivity of the maximizer map $\zeta^\ast(\theta,\xi)$, we have\begin{align}\label{dphi theta}
&\| h_{\theta'} (G(\theta'),\xi) -h_{\theta} (G(\theta'),\xi) \| \nonumber\\
&\le \| \nabla_x \phi(G(\theta'),\xi,\zeta^\ast(\theta',\xi))-\nabla_x \phi(G(\theta'),\xi,\zeta^\ast(\theta,\xi))\| \nonumber\\
&\le L_{\theta\zeta}^\phi \|\zeta^\ast(\theta,\xi)) - \zeta^\ast(\theta',\xi))\| \le \frac{L_{\theta\zeta}^\phi L_{\zeta\theta}^\phi}{\mu}\|\theta-\theta'\|.
\end{align}
Substituting \eqref{eq:G norm 4}--\eqref{dphi theta} into \eqref{eq:G norm 3} yields 
\begin{align}\label{eq:gamma Gs difference 2}
 &  \|G(\theta')-G(\theta)\|    \le \kappa_{\rm rm}  \|\theta-\theta'\|,
\end{align} 
Thus, when $\kappa_{\rm rm} = \big( L_{\theta\xi}^f \varepsilon  +  L_{\theta\zeta}^\phi L_{\zeta\theta}^\phi/\mu\big)/\gamma<1$, $G(\cdot)$ is a contraction. Namely, in the absence of the approximation error, the risk minimization converges to a unique stable point $\theta_s$.

We next examine the risk minimization with the approximation error. 
Similarly, following the way of defining \eqref{eq:rm opt obj}, we define the minimization objective in \eqref{eq:Alg risk minimization} as   
\begin{equation*}
F_\theta^\epsilon (x): = \mathbb{E}_{\xi\sim\hat{\mathbb{P}}(\theta)} \big[ \phi\big(x,\xi,z(\theta,\xi) \big) \big].
\end{equation*}   
Therefore, we have that
\begin{align}\label{eq:zeta difference}
 &\| \nabla F_\theta(x ) -\nabla F_\theta^\epsilon(x) \| \nonumber \\ 
 &= \|\mathbb{E}_{\xi\sim\hat{\mathbb{P}}(\theta)}[\nabla_x\phi(x,\xi,z(\theta,\xi))-\nabla_x\phi(x,\xi,\zeta^\ast(\theta,\xi)) ]\| \nonumber \\
 &\le \mathbb{E}_{\xi\sim\hat{\mathbb{P}}(\theta)}[\|\nabla_x\phi(x,\xi,z(\theta,\xi))-\nabla_x\phi(x,\xi,\zeta^\ast(\theta,\xi)) \|] \nonumber \\
&\le \mathbb{E}_{\xi\sim\hat{\mathbb{P}}(\theta)}\big[L_{\theta\zeta}^\phi \|z(\theta,\xi)-\zeta^\ast(\theta,\xi)\|\big] \le L_{\theta\zeta}^\phi\epsilon,  \end{align}
where the second inequality follows from  
$\nabla_\theta\phi(\theta,\xi,\zeta)$  is $L_{\theta\zeta}^\phi$-lipschitz in $\zeta$. The last inequality follows from 
the definition of $z(\theta,\xi)$. Define the right-hand side of \eqref{eq:Alg risk minimization} as $G^\epsilon(\cdot)$. Thus, we have $\theta_{t+1} = G^\epsilon (\theta_t)$ and 
\begin{equation}\label{eq:rm mapping epsilon}
G^\epsilon(\theta) = {\arg\min}_{x \in \Theta} F_\theta^\epsilon(x).
\end{equation} 
By the strong convexity of $f(\cdot,\xi)$, we have that the first-order condition
\begin{align}\label{eq: G aprox bound 1}
&\gamma\|G(\theta)-G^\epsilon(\theta)\|^2 \nonumber \\
&\le \langle \nabla F_\theta(G^\epsilon(\theta)) - \nabla F_\theta(G(\theta)), G^\epsilon(\theta)-G (\theta) \rangle  
\end{align}
By the definitions of $G(\theta)$ and $G^\epsilon(\theta)$, as in \eqref{eq:rm mapping}  and \eqref{eq:rm mapping epsilon}, we have   
\begin{align*}
&\big\langle\nabla F_\theta (G(\theta)), G^\epsilon(\theta)- G(\theta)\big\rangle \ge 0, \nonumber \\
& \big\langle\nabla F_\theta^\epsilon (G^\epsilon(\theta)), G(\theta)- G^\epsilon(\theta)\big\rangle \ge 0. 
\end{align*}
Substituting these inequalities into \eqref{eq: G aprox bound 1}, we obtain 
\begin{align}\label{eq: G aprox bound 2}
&\gamma\|G(\theta)-G^\epsilon(\theta)\|^2 \nonumber \\
&\le \big(\nabla F_\theta(G^\epsilon(\theta)) - \nabla F_\theta^\epsilon(G^\epsilon(\theta)\big)^\top
\big(G^\epsilon(\theta) - G(\theta)\big) \nonumber \\ 
&\le \| \nabla F_\theta(G^\epsilon(\theta)) - \nabla F_\theta^\epsilon(G^\epsilon(\theta))\| \| G^\epsilon(\theta) - G(\theta)\| \nonumber \\ 
&\le L_{\theta\zeta}^\phi \epsilon \| G^\epsilon(\theta) - G(\theta)\|,
\end{align}
where the last inequality follows from substituting \eqref{eq:zeta difference} into \eqref{eq: G aprox bound 2}. Thus, we obtain 
\begin{equation}\label{eq:G aprox bound}
 \|G(\theta)-G^\epsilon(\theta)\| \le \frac{L_{\theta\zeta}^\phi \epsilon}{\gamma}   .
\end{equation}
We now combine the contraction property of the exact update map $G(\cdot)$
with the perturbation bound $\epsilon$.
Recall that the exact stable point satisfies
$\theta_s = G(\theta_s),$ 
whereas the iterate generated by Algorithm 1 satisfies $\theta_{t+1}=G^\epsilon(\theta_t).
$
By adding and subtracting $G(\theta_t)$, we obtain 
\begin{align*}
&\|\theta_{t+1} - \theta_s\| =  \| G^\epsilon(\theta_t) -  G (\theta_s)  \|   \nonumber \\ 
& \le \| G^\epsilon(\theta_t) -  G (\theta_t)  \|   + \| G (\theta_t)  - G (\theta_s)  \| \nonumber \\ 
&\le \kappa_{\rm rm} \|\theta_t-\theta_s\| + \frac{L_{\theta\zeta}^\phi \epsilon}{\gamma} ,
\end{align*} 
where the second inequality follows from \eqref{eq:gamma Gs difference 2} and \eqref{eq:G aprox bound}.
By the infinite geometric series sum, we obtain the results.
\hfill \qed
 
\noindent\textit{Proof of Theorem~\ref{theo:gradient descent}:}
Following the proof of Theorem~\ref{theo:risk minimization}, we first investigate the convergence of Algorithm~\ref{alg:gradient descent} in the exact setting, and then examine its performance taking into account the approximation error. In the absence of approximation error, the gradient estimate is  \begin{equation}\label{eq:gradient estimate exact} 
\tilde{g} (\theta_t) = \mathbb{E}_{\xi \sim \hat{\mathbb{P}}(\theta_t)} \big[\nabla_\theta  \phi\big(\theta_t,\xi ,\zeta^\ast(\theta_t,\xi )\big)\big].
\end{equation}
Note that by Danskin's Theorem, we have $\nabla_\theta f(\theta,\xi) = \nabla_\theta \phi (\theta,\xi,\zeta^\ast(\theta,\xi))$. Therefore, $$\tilde{g}(\theta_t) = \mathbb{E}_{\xi\sim \hat{\mathbb{P}}(\theta_t)}[\nabla_{\theta}f(\theta_t,\xi)],$$ which is the gradient of the exact robust loss $f$. 
Given \eqref{eq:gradient estimate exact}, in the exact setting, the decision is updated according to 
\begin{equation}\label{eq:gradient descent exact} 
\tilde{\theta}_{t+1} = G_{\rm gd}(\tilde{\theta}_t)= {\rm Proj}_{\Theta}\big(\tilde{\theta}_t - \eta \tilde{g}(\tilde{\theta}_t)\big)  ,
\end{equation}
with initial value $\tilde{\theta}_0 = \theta_0$.
Denote the gradient descent operation with the approximation error, i.e., right-hand side of \eqref{eq:repeat gradient descent} as $G_{\rm gd
}^\epsilon$, so we have $\theta_{t+1}= G_{\rm gd}^\epsilon(\theta_t)$. 
Since the projection onto a convex set is non-expansive, we omit the projection operator ${\rm Proj}$ when analyzing the contraction property of $G_{\rm gd}(\cdot)$ and $G_{\rm gd}^\epsilon(\cdot)$.

For $\theta,\theta' \in \Theta$, we have that 
\begin{align}\label{eq:G_gd diff}
\hspace{-0.5em}G_{\rm gd}(\theta)-G_{\rm gd}(\theta') =  \theta - \eta \tilde{g}(\theta) - \big(\theta' -  \eta \tilde{g}(\theta')\big).
\end{align}
By taking the square norm on both sides of \eqref{eq:G_gd diff}, we obtain 
\begin{align}\label{eq:G_gd contraction}
&\|G_{\rm gd}(\theta)-G_{\rm gd}(\theta')\|^2 \nonumber \\
&=  \big\|\theta - \eta  \tilde{g}(\theta)  - \big(\theta' -  \eta  \tilde{g}(\theta') \big)  \big\|^2  \nonumber \\ 
&= \|\theta-\theta'\|^2 - 2\eta(\theta-\theta')^\top\big( \tilde{g}(\theta)  -  \tilde{g}(\theta') \big) \nonumber \\
&\hspace{1em}+\eta^2\|\tilde{g}(\theta)  -  \tilde{g}(\theta') \|^2 \nonumber \\
&=  T_1 - 2\eta^\top T_2 + \eta^2 T_3,
\end{align}
where $T_1=\|\theta-\theta'\|^2$, $T_3 = \|\tilde{g}(\theta)  - \tilde{g}(\theta')\|^2$   and $T_2=(\theta-\theta')^\top\big(\tilde{g}(\theta) - \tilde{g}(\theta')\big)$.
We next bound $T_2$ and $T_3$, respectively.
For 
$T_2$, by the $\gamma$-strong convexity of $F_\theta(\cdot)$, we have that 
\begin{align}\label{eq:T21}
&T_2 = (\theta-\theta')^\top\big(  \mathbb{E}_{\xi \sim \hat{\mathbb{P}}(\theta)}[\nabla f(\theta,\xi)]
- \mathbb{E}_{\xi \sim \hat{\mathbb{P}}(\theta')}[\nabla f(\theta',\xi)]\big) \nonumber \\
&= (\theta-\theta')^\top\big(  \mathbb{E}_{\xi \sim \hat{\mathbb{P}}(\theta)}[\nabla f(\theta,\xi)]
- \mathbb{E}_{\xi \sim \hat{\mathbb{P}}(\theta)}[\nabla f(\theta',\xi)] \nonumber \\
&\hspace{1em}+\mathbb{E}_{\xi \sim \hat{\mathbb{P}}(\theta)}[\nabla f(\theta',\xi)] - \mathbb{E}_{\xi \sim \hat{\mathbb{P}}(\theta')}[\nabla f(\theta',\xi)]\big) \nonumber \\
 &\ge \gamma\|\theta-\theta'\|^2 \nonumber \\ 
 &\hspace{1em}- \| \theta-\theta'\| \big\|  \mathbb{E}_{\xi \sim \hat{\mathbb{P}}(\theta)}[\nabla f(\theta',\xi)] - \mathbb{E}_{\xi \sim \hat{\mathbb{P}}(\theta')}[\nabla f(\theta',\xi)]\big\| \nonumber \\ 
&\ge  \Big(\gamma-\varepsilon L_{\theta\xi}^f \Big)\|\theta-\theta'\|^2,
\end{align}
where the last inequality follows from the $L_{\theta\xi}^f$-Lipschitz continuity $\nabla_\theta f(\theta,\xi)$ in $\xi$, and $\varepsilon$-sensitivity of $\hat{\mathbb{P}}(\cdot)$.
To avoid confusion, we denote $\nabla f(\theta,\xi)  := \nabla_\theta f(\theta,\xi)  $.
We next bound $T_3$: 
\begin{align}\label{eq:T3 0}
&T_3 
= \Big(\big\|\mathbb{E}_{\xi \sim \hat{\mathbb{P}}(\theta)}[\nabla f(\theta,\xi)]- \mathbb{E}_{\xi \sim \hat{\mathbb{P}}(\theta)}[\nabla f(\theta',\xi)]\big\|\nonumber \\ 
&\hspace{1em}+\big\| \mathbb{E}_{\xi \sim \hat{\mathbb{P}}(\theta)}[\nabla f(\theta',\xi)]-\mathbb{E}_{\xi \sim \hat{\mathbb{P}}(\theta')}[\nabla f(\theta',\xi)]\big\|\Big)^2 \nonumber \\ 
&\le \Big(L_{\theta\theta}^f+ \varepsilon L_{\theta\xi}^f \Big)^2\| \theta-\theta'\|^2,
\end{align}
where the inequality follows from the Lipschitzness of $\nabla_\theta f $ in $\theta$, and by following  the derivations of \eqref{eq:T21}.
To simplify notation, we denote $\beta =L_{\theta\theta}^f+ \varepsilon L_{\theta\xi}^f $
Substituting \eqref{eq:T21} and \eqref{eq:T3 0} 
into \eqref{eq:G_gd contraction}, we obtain 
\begin{align}\label{eq:G_gd contract mapping}
\|G_{\rm gd}(\theta)-G_{\rm gd}(\theta')\|^2  \!  \le \! \big(\eta^2\beta^2\! + \!2\eta(\varepsilon L_{\theta\xi}^f\!-\!\gamma) \!+\!1\big) \|\theta-\theta'\|^2 .
\end{align} 
As defined in Theorem~\ref{theo:gradient descent}, $\kappa_{\rm gd} = \sqrt{\eta^2\beta^2+2\eta(\varepsilon L_{\theta\xi}^f-\gamma)+1}$. When $0<\eta < \frac{2(\gamma-\varepsilon L_{\theta\xi}^f)}{\beta^2} $, we have that $\kappa_{\rm gd}<1 $. 
Taking $\theta = \theta_t$ and $\theta' = \theta_s$  in \eqref{eq:G_gd contract mapping}, and using $G_{\rm gd}(\theta_s) = \theta_s$, we obtain 
\begin{equation}\label{eq:contration exact}
\|\theta_{t+1}-\theta_s\| \le \kappa_{\rm gd} \|\theta_t-\theta_s\|.
\end{equation}
This means that $G_{\rm gd}(\cdot)$ is a contraction mapping, and the iteration result $\theta_t$ generated by \eqref{eq:gradient descent exact} will converge to a unique stable point $\theta_s$. 

We next account for the effect of the approximation error in Algorithm~\ref{alg:gradient descent}. 
Following the way of defining \eqref{eq:gradient estimate exact}, we denote the gradient estimate \eqref{eq:gradient estimate} as $  g (\theta_t)$. 
By the Lipschitzness of $\nabla_\theta\phi$ in $\zeta$, we have that 
\begin{align}\label{eq:gradient estimate error}
&\|\tilde{g} (\theta_t) \!-\!g (\theta_t)\|  \!\nonumber \\ 
&\le \mathbb{E}_{\xi \sim \hat{\mathbb{P}}(\theta_t)}\big[\big\|\nabla_\theta \phi\big(\theta_t,\xi,z(\theta_t,\xi)\big)\!-\! \nabla_\theta \phi\big(\theta_t,\xi,\zeta^\ast(\theta_t,\xi)\big)  \big\|\big]  \nonumber\\
&\le L_{\theta\zeta}^\phi \epsilon. 
\end{align}
Then, we have that 
\begin{align}\label{eq:approaximate contract}
&\| G_{\rm gd}^\epsilon(\theta)-\theta_s \|  \le \| G_{\rm gd}^\epsilon(\theta)-G_{\rm gd}(\theta) \|  + \|G_{\rm gd}(\theta)- \theta_s\| \nonumber \\ 
&\le \eta \|\tilde{g} (\theta)  - g (\theta )\| +\kappa_{\rm gd}\|\theta -\theta_s \| \nonumber\\
& \le \kappa_{\rm gd}\|\theta -\theta_s \| + \eta L_{\theta\zeta}^\phi\epsilon,
\end{align}
where the second inequality follows from \eqref{eq:G_gd diff}, and by substituting \eqref{eq:contration exact}. The last inequality follows from substituting \eqref{eq:gradient estimate error} into \eqref{eq:approaximate contract}.
Replacing $\theta$ with $\theta_t$ in \eqref{eq:approaximate contract}, we obtain 
\begin{equation*}
    \|\theta_{t+1}-\theta_s\| \le \kappa_{\rm gd}\|\theta_t-\theta_s\| + \eta L_{\theta\zeta}^\phi\epsilon. 
\end{equation*}
By the infinite geometric series sum, we obtain the results.
\hfill \qed 

\noindent
\textit{Proof of Lemma~\ref{lemma:L_xi_f Lipschitz}.}
For $\xi_1,\xi_2\in \Xi$, and for any fixed $\theta$, we have that 
\begin{align*} 
&\| f(\theta,\xi_1) -f(\theta,\xi_2)\| \nonumber \\ 
&=  \big\| l(\theta,\zeta^\ast(\theta,\xi_1)) - l(\theta,\zeta^\ast(\theta,\xi_2)) \nonumber \\
&\hspace{1em}- \lambda(\theta)\big(c(\xi_1,\zeta^\ast(\theta,\xi_1))-c(\xi_2,\zeta^\ast(\theta,\xi_2)) \big) \big\|\nonumber \\
& \le  L_\zeta\|\zeta^\ast(\theta,\xi_1)-\zeta^\ast(\theta,\xi_2)\|\!\nonumber\\
&\hspace{1em}+ \! \lambda_{\max}\|c(\xi_1,\zeta^\ast(\theta,\xi_1)) - c(\xi_1,\zeta^\ast(\theta,\xi_2)) \| \nonumber\\ 
&\hspace{1em}+ \lambda_{\max}\| c(\xi_1,\zeta^\ast(\theta,\xi_2))-c(\xi_2,\zeta^\ast(\theta,\xi_2))\| \nonumber \\ 
&\le  \big(L_\zeta + 2\lambda_{\max}D_{\xi}\big)\|\zeta^\ast(\theta,\xi_1)-\zeta^\ast(\theta,\xi_2)\| \nonumber \\
&\hspace{1em}+ 2\lambda_{\max}D_{\xi}\|\xi_1-\xi_2\|\nonumber \\ 
&\le L_\xi^f \|\xi_1-\xi_2\|, 
\end{align*}
where $L_\xi^f = 2\lambda_{\max}(L_\zeta+2\lambda_{\max}D_\xi)/\mu + 2\lambda_{\max}D_\xi$. The first inequality follows from the function $l(\theta,\zeta)$ is $L_\zeta$-Lipschitz in $\zeta$ and $\lambda(\theta) \le \lambda_{\max}$. The second inequality holds by $\| c(\xi,\zeta)-c(\xi,\zeta')\| \le 2\lambda_{\max}\|\zeta-\zeta'\|$ and $\| c(\xi',\zeta)-c(\xi,\zeta)\| \le 2\lambda_{\max}\|\xi-\xi'\|$, which follow the derivation of  \eqref{eq:c difference}. The last inequality follows from  $\zeta^\ast(\theta,\xi)$ is $L_{\zeta\xi}^\phi/\mu$-Lipschitz continuous in $\xi$, as in \eqref{eq:zeta(xi) Lipschitz}, and $L_{\zeta\xi}^\phi = 2\lambda_{\max}$, as in \eqref{eq:phi lip zeta_xi}.

\bibliographystyle{ieeetr}
\bibliography{references}

\begin{thebibliography}{10}

\bibitem{spall2005introduction}
J.~C. Spall, {\em Introduction to Stochastic Search and Optimization: Estimation, Simulation, and Control}.
\newblock Hoboken, New Jersey: John Wiley \& Sons, 2005.

\bibitem{tang2023zeroth}
Z.~Tang, D.~Rybin, and T.-H. Chang, ``Zeroth-order optimization meets human feedback: Provable learning via ranking oracles,'' in {\em International Conference on Learning Representations}, 2024.

\bibitem{kraft2023stochastic}
E.~Kraft, M.~Russo, D.~Keles, and V.~Bertsch, ``Stochastic optimization of trading strategies in sequential electricity markets,'' {\em European Journal of Operational Research}, vol.~308, no.~1, pp.~400--421, 2023.

\bibitem{ma2026stochastic}
H.~Ma, M.~Zeilinger, and M.~Muehlebach, ``Stochastic online optimization for cyber-physical and robotic systems,'' {\em Machine Learning}, vol.~115, no.~1, p.~11, 2026.

\bibitem{besbes2015non}
O.~Besbes, Y.~Gur, and A.~Zeevi, ``Non-stationary stochastic optimization,'' {\em Operations Research}, vol.~63, no.~5, pp.~1227--1244, 2015.

\bibitem{keimer2018information}
A.~Keimer, N.~Laurent-Brouty, F.~Farokhi, H.~Signargout, V.~Cvetkovic, A.~M. Bayen, and K.~H. Johansson, ``Information patterns in the modeling and design of mobility management services,'' {\em Proceedings of the IEEE}, vol.~106, no.~4, pp.~554--576, 2018.

\bibitem{perdomo2020performative}
J.~Perdomo, T.~Zrnic, C.~Mendler-D{\"u}nner, and M.~Hardt, ``Performative prediction,'' in {\em International Conference on Machine Learning}, pp.~7599--7609, 2020.

\bibitem{hardt2025performative}
M.~Hardt and C.~Mendler-D{\"u}nner, ``Performative prediction: Past and future,'' {\em Statistical Science}, vol.~40, no.~3, pp.~417--436, 2025.

\bibitem{bianchin2023online}
G.~Bianchin, M.~Vaquero, J.~Cortes, and E.~Dall'Anese, ``Online stochastic optimization for unknown linear systems: Data-driven controller synthesis and analysis,'' {\em IEEE Transactions on Automatic Control}, vol.~69, no.~7, pp.~4411--4426, 2023.

\bibitem{narang2023multiplayer}
A.~Narang, E.~Faulkner, D.~Drusvyatskiy, M.~Fazel, and L.~J. Ratliff, ``Multiplayer performative prediction: Learning in decision-dependent games,'' {\em Journal of Machine Learning Research}, vol.~24, no.~202, pp.~1--56, 2023.

\bibitem{wang2023network}
X.~Wang, C.-Y. Yau, and H.~T. Wai, ``Network effects in performative prediction games,'' in {\em International Conference on Machine Learning}, pp.~36514--36540, PMLR, 2023.

\bibitem{le2025learning}
H.~Le~Cadre, M.~Datar, M.~Guckert, and E.~Altman, ``Learning market equilibria preserving statistical privacy using performative prediction,'' {\em IEEE Transactions on Automatic Control}, vol.~70, no.~11, pp.~7125--7140, 2025.

\bibitem{bertrand2023stability}
Q.~Bertrand, A.~J. Bose, A.~Duplessis, M.~Jiralerspong, and G.~Gidel, ``On the stability of iterative retraining of generative models on their own data,'' in {\em International Conference on Learning Representations}, 2024.

\bibitem{miller2021outside}
J.~P. Miller, J.~C. Perdomo, and T.~Zrnic, ``Outside the echo chamber: Optimizing the performative risk,'' in {\em International Conference on Machine Learning}, pp.~7710--7720, PMLR, 2021.

\bibitem{mohajerin2018data}
P.~M. Esfahani and D.~Kuhn, ``Data-driven distributionally robust optimization using the wasserstein metric: Performance guarantees and tractable reformulations,'' {\em Mathematical Programming}, vol.~171, no.~1, pp.~115--166, 2018.

\bibitem{blanchet2019quantifying}
J.~Blanchet and K.~Murthy, ``Quantifying distributional model risk via optimal transport,'' {\em Mathematics of Operations Research}, vol.~44, no.~2, pp.~565--600, 2019.

\bibitem{kuhn2025distributionally}
D.~Kuhn, S.~Shafiee, and W.~Wiesemann, ``Distributionally robust optimization,'' {\em Acta Numerica}, vol.~34, pp.~579--804, 2025.

\bibitem{duchi2021statistics}
J.~C. Duchi, P.~W. Glynn, and H.~Namkoong, ``Statistics of robust optimization: A generalized empirical likelihood approach,'' {\em Mathematics of Operations Research}, vol.~46, no.~3, pp.~946--969, 2021.

\bibitem{shafieezadeh2019regularization}
S.~Shafieezadeh-Abadeh, D.~Kuhn, and P.~M. Esfahani, ``Regularization via mass transportation,'' {\em Journal of Machine Learning Research}, vol.~20, no.~103, pp.~1--68, 2019.

\bibitem{shafieezadeh2018wasserstein}
S.~Shafieezadeh-Abadeh, V.~A. Nguyen, D.~Kuhn, and P.~Mohajerin~Esfahani, ``Wasserstein distributionally robust {K}alman filtering,'' in {\em Advances in Neural Information Processing Systems}, 2018.

\bibitem{coulson2021distributionally}
J.~Coulson, J.~Lygeros, and F.~D{\"o}rfler, ``Distributionally robust chance constrained data-enabled predictive control,'' {\em IEEE Transactions on Automatic Control}, vol.~67, no.~7, pp.~3289--3304, 2021.

\bibitem{taskesen2023distributionally}
B.~Taskesen, D.~Iancu, {\c{C}}.~Ko{\c{c}}yi{\u{g}}it, and D.~Kuhn, ``Distributionally robust linear quadratic control,'' in {\em Advances in Neural Information Processing Systems}, 2023.

\bibitem{mcallister2024distributionally}
R.~D. McAllister and P.~M. Esfahani, ``Distributionally robust model predictive control: Closed-loop guarantees and scalable algorithms,'' {\em IEEE Transactions on Automatic Control}, 2024.

\bibitem{brouillon2025distributionally}
J.-S. Brouillon, A.~Martin, J.~Lygeros, F.~D{\"o}rfler, and G.~Ferrari-Trecate, ``Distributionally robust infinite-horizon control: from a pool of samples to the design of dependable controllers,'' {\em IEEE Transactions on Automatic Control}, 2025.

\bibitem{kuhn2019wasserstein}
D.~Kuhn, P.~M. Esfahani, V.~A. Nguyen, and S.~Shafieezadeh-Abadeh, ``Wasserstein distributionally robust optimization: Theory and applications in machine learning,'' in {\em Operations Research \& Management Science In the Age of Analytics}, pp.~130--166, Informs, 2019.

\bibitem{liu2022distributionally}
Z.~Liu, Q.~Bai, J.~Blanchet, P.~Dong, W.~Xu, Z.~Zhou, and Z.~Zhou, ``Distributionally robust {$Q$}-learning,'' in {\em International Conference on Machine Learning}, 2022.

\bibitem{luo2020distributionally}
F.~Luo and S.~Mehrotra, ``Distributionally robust optimization with decision dependent ambiguity sets,'' {\em Optimization Letters}, vol.~14, no.~8, pp.~2565--2594, 2020.

\bibitem{qu2025decision}
C.~Qu, H.~Jia, and P.~You, ``Decision-dependent distributionally robust optimization with application to dynamic pricing,'' in {\em Proc. of the 64th IEEE Conference on Decision and Control}, 2025.

\bibitem{noyan2020distributionally}
N.~Noyan, G.~Rudolf, and M.~Lejeune, ``Distributionally robust optimization with decision-dependent ambiguity set,'' {\em Optimization Letters}, vol.~14, no.~8, pp.~2541--2564, 2020.

\bibitem{fonseca2023decision}
D.~Fonseca and M.~Junca, ``Decision-dependent distributionally robust optimization,'' {\em arXiv preprint arXiv:2303.03971}, 2023.

\bibitem{xue2024distributionally}
S.~Xue and Y.~Sun, ``Distributionally robust performative prediction,'' in {\em Advances in Neural Information Processing Systems}, 2024.

\bibitem{jia2025distributionally}
Z.~Jia, Y.~Wang, R.~Dong, and G.~A. Hanasusanto, ``Distributionally robust performative optimization,'' in {\em Advances in Neural Information Processing Systems}, 2025.

\bibitem{sinha2017certifying}
A.~Sinha, H.~Namkoong, R.~Volpi, and J.~C. Duchi, ``Certifying some distributional robustness with principled adversarial training,'' in {\em International Conference on Learning Representations}, 2018.

\bibitem{edwards2011kantorovich}
D.~A. Edwards, ``On the {Kantorovich--Rubinstein} theorem,'' {\em Expositiones Mathematicae}, vol.~29, no.~4, pp.~387--398, 2011.

\bibitem{GiveMeSomeCredit}
C.~Fusion and W.~Cukierski, ``Give me some credit.'' \url{https://kaggle.com/competitions/GiveMeSomeCredit}, 2011.
\newblock Kaggle.

\end{thebibliography}

\end{document}